\newcommand{\CC}{\mathbb{C}}
\newcommand{\ZZ}{\mathbb{Z}}
\newcommand{\RR}{\mathbb{R}}
\newcommand{\calH}{\mathscr{H}}
\newcommand{\calB}{\mathscr{B}}
\newcommand{\GL}{\mathrm{GL}}
\newcommand{\GLTNN}{\mathrm{GL_{\geq 0}}}
\newcommand{\Gr}{\mathrm{Gr}}
\newcommand{\GrTP}{\mathrm{Gr_{>0}}}
\newcommand{\GrTNN}{\mathrm{Gr_{\geq 0}}}
\newcommand{\CMA}{\CC^\mathrm{MA}}
\newcommand{\RMA}{\RR^\mathrm{MA}}
\newcommand{\Mat}{\mathrm{Mat}}
\newcommand{\MatTP}{\mathrm{Mat}_{>0}}
\newcommand{\MatTNN}{\mathrm{Mat}_{\geq 0}}
\newcommand{\boldu}{{\bf u}}
\newcommand{\boldw}{{\bf w}}
\newcommand{\boldx}{{\bf x}}
\newcommand{\boldy}{{\bf y}}
\newcommand{\boldz}{{\bf z}}
\newcommand{\Plucker}{Pl\"ucker\xspace}
\newcommand{\Branden}{Br\"and\'en\xspace}
\newcommand{\defn}[1]{\emph{#1}}
\newtheorem{lemma}{Lemma}[section]
\newtheorem{theorem}[lemma]{Theorem}
\newtheorem{proposition}[lemma]{Proposition}
\theoremstyle{definition}
\newtheorem{example}[lemma]{Example}
\newtheorem{remark}[lemma]{Remark}
\newenvironment{packedenum}{
\begin{enumerate}
  \setlength{\itemsep}{.25ex}
}{\end{enumerate}}
\newenvironment{packeditemize}{
\begin{itemize}
  \setlength{\itemsep}{.25ex}
}{\end{itemize}}
\title{Total nonnegativity and stable polynomials%
{
    \footnotetext{%
    \emph{2010 Mathematics Subject Classification.}
      Primary: 32A60; Secondary: 14M15, 14P10, 15B48
    }%
}}
\author{Kevin Purbhoo}
\begin{document}

\maketitle


\begin{abstract}
We consider homogeneous multiaffine polynomials whose coefficients
are the \Plucker coordinates of a point $V$ of the Grassmannian.
We show that such a polynomial is stable (with respect to the
upper half plane) if and only if $V$ is in the totally nonnegative
part of the Grassmannian.  To prove this, we consider an action of
matrices on multiaffine polynomials.  We show that 
a matrix $A$ preserves stability of polynomials if and only if
$A$ is totally nonnegative.  The proofs are applications of classical
theory of totally nonnegative matrices, and the generalized 
P\'olya--Schur theory of Borcea and \Branden.
\end{abstract}


\section{Introduction}

A multivariate polynomial $f(\boldx) = f(x_1, \dots, x_n) \in \CC[\boldx]$ 
is said to be \defn{stable} 
if either $f \equiv 0$, or $f(\boldu) \neq 0$, for all 
$\boldu = (u_1, \dots, u_n) \in \calH^n$, where 
$\calH = \{u \in \CC \mid \mathrm{Im}(u) > 0\}$ 
denotes the upper half plane in $\CC$. 
The theory of stable polynomials generalizes and vastly extends
the theory of univariate real polynomials with only real roots.
Although the idea of considering polynomials (and more generally
analytic functions) with no zeros inside a domain has an extensive
history in complex analysis, more recent developments --- 
notably the generalized P\'olya--Schur theory of Borcea and \Branden
\cite{BB1, BB2} --- 
have generated new interest in the subject, and a wide variety of 
new applications have been discovered in areas
such as matrix theory, statistical mechanics, and combinatorics.
We refer the reader to the survey \cite{Wagner-survey} 
for an introduction
to the theory of stable polynomials and an overview of some of its 
applications.

Central to the theory 
is the vector space $\CMA[\boldx]$ of
\defn{multiaffine} polynomials. These are 
the polynomials in $\CC[\boldx]$ that have degree at most one in each 
individual variable.  The Grace--Walsh--Szeg\"o
coincindence theorem \cite{Grace, Szego, Walsh}
allows one to reduce
many problems about stable polynomials to the multiaffine
case; moreover, a number of applications of the theory, notably 
those involving matroid theory \cite{Branden, COSW}, statistical
mechanics \cite{BBL},
and the present paper, involve only multiaffine polynomials.

The \defn{Grassmannian} $\Gr(k,n)$ is the space of all 
$k$-dimensional linear subspaces of $\CC^n$.
There are two common ways to specify a point $V \in \Gr(k,n)$.  
The simplest is 
as the column space of a rank $k$ complex 
matrix $M \in \Mat(n \times k)$;
however, for any given $V$, this matrix $M$ is not unique.
A more canonical way to specify $V$ is via its \Plucker coordinates.
Let $M[I]$ denote the $k \times k$ submatrix of $M$ with row
set $I \in {[n] \choose k}$.
The \defn{Pl\"ucker coordinates} of $V$
are the maximal minors 
$\big[\det(M[I]) : I \in {[n] \choose k}\big]$.  
These are homogeneous coordinates for $V$, i.e. they are well-defined
up rescaling by a nonzero constant.
We can encode the Pl\"ucker coordinates of $V$ into
a homogeneous multiaffine polynomial of degree $k$:
we will say that the polynomial
\[
   \sum_{I \in {[n] \choose k}} \det(M[I])\boldx^I
 \in \CMA[\boldx]
\]
\defn{represents} $V$, where $\boldx^I := \prod_{i \in I} x_i$.
Not every homogeneous multiaffine polynomial of degree $k$
represents a point of $\Gr(k,n)$.  
A necessary and sufficient condition is
that the coefficients satisfy the quadratic Pl\"ucker relations,
the defining equations for $\Gr(k,n)$ as a projective variety.

If $V \in \Gr(k,n)$ is the column space of a matrix $M$
whose maximal minors are all nonnegative, we say that
$V$ is \defn{totally nonnegative}. 
The \defn{totally nonnegative part} of the Grassmannian, 
denoted $\GrTNN (k,n)$, is the set of all totally nonnegative
$V \in \Gr(k,n)$.

The totally nonnegative part of a flag variety (the Grassmannian
being the most important example)
was first introduced by Lusztig \cite{Lusztig},
as a part of a generalization of the classical theory of totally
nonnegative matrices.
Rietsch showed that totally nonnegative part of any flag
variety has a decomposition into cells
\cite{Rietsch}; Marsh and Rietsch described a parameterization of
the cells \cite{MR}.  In the case of the Grassmannian, Lusztig's
definition agrees with the definition above.  Postnikov
described the indexing of the cells $\GrTNN(k,n)$ and their 
parameterizations in
combinatorially explicit ways \cite{Postnikov},
making $\GrTNN(k,n)$ a very accessible object.
Total nonnegativity has played a key role in a number of recent
applications.
Some of these include: the development of 
cluster algebras \cite{Fomin}; 
soliton solutions to the KP equation \cite{KW}; 
the (remarkably well-behaved) positroid stratification 
of the Grassmannian \cite{KLS}, which has applications to geometric
Schubert calculus \cite{Knutson}.
Our first main result relates total nonnegativity on the Grassmannian 
to stable polynomials.

\begin{theorem}
\label{thm:grassmannian}
Suppose $f(\boldx) \in \CMA[\boldx]$ is a 
homogeneous multiaffine 
polynomial of degree $k$
that represents a point $V \in \Gr(k,n)$.  
Then $f(\boldx)$ is stable if and only
if $V$ is totally nonnegative.
\end{theorem}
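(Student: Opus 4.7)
\medskip

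\noindent\textbf{Proof proposal.}

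The plan is to deduce Theorem~\ref{thm:grassmannian} from the companion matrix result announced in the abstract: the natural action of $A \in \Mat(n \times n)$ on $\CMA[\boldx]$---given by Grassmann substitution $x_j \mapsto \sum_i A_{ij} x_i$ followed by truncation to the multiaffine part, equivalently by the induced action on Pl\"ucker coordinates---preserves stability if and only if $A \in \MatTNN(n \times n)$.

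For the direction $V \in \GrTNN \Rightarrow f$ stable, take the reference subspace $V_0 = \mathrm{span}(e_1,\ldots,e_k) \in \GrTNN(k,n)$, whose representing polynomial $f_{V_0}(\boldx) = x_1 \cdots x_k$ is evidently stable. The task then reduces to writing $V = A V_0$ for some $A \in \MatTNN(n \times n)$; granting this, $f_V = A \cdot f_{V_0}$ is stable by the matrix theorem. Such an $A$ exists as a consequence of the Loewner--Whitney factorization of totally nonnegative matrices into elementary generators, combined with the Lusztig--Rietsch--Postnikov cellular description of $\GrTNN(k,n)$.

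The converse can be argued directly by depolarization. Since $\partial_i$ preserves stability (Gauss--Lucas on univariate specializations), for each $S \in \binom{[n]}{k-1}$ the derivative
\begin{displaymath}
\partial_S f(\boldx) \;=\; \sum_{j \notin S} p_{S \cup \{j\}}\, x_j
\end{displaymath}
is a stable linear form in the remaining variables. A short analysis of the $k=1$ base case shows that a nonzero stable linear form in at least two variables has all its coefficients on a common ray from the origin of $\CC$; equivalently, there is $c_S \in \CC^{\ast}$ with $c_S\, p_{S \cup \{j\}} \geq 0$ for every $j \notin S$. When two such indices $S, S' \in \binom{[n]}{k-1}$ differ in a single element and the basis $T = S \cup S'$ satisfies $p_T \neq 0$, the relations $c_S p_T \geq 0$ and $c_{S'} p_T \geq 0$ force $c_S / c_{S'}$ to be positive real. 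Connectedness of the matroid basis-exchange graph on the support $\{I : p_I \neq 0\}$ of $V$ then propagates a single global complex scalar $c$ with $c p_I \geq 0$ for every $I$, placing $V$ in $\GrTNN(k,n)$.

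The principal obstacle is the matrix theorem itself, specifically the direction ``$A \in \MatTNN \Rightarrow$ stability-preserving''. The plan is to reduce via Loewner--Whitney to the elementary generators: positive diagonals act by $f(\boldx) \mapsto f(a_1 x_1, \ldots, a_n x_n)$ and trivially preserve stability, while $I + tE_{ij}$ with $t \geq 0$ acts by $f \mapsto f + t\, x_i\, (\partial_j f)|_{x_i = 0}$. Establishing that this last operator preserves stability is the technical heart of the argument, most naturally approached via the Borcea--\Branden symbol calculus for linear operators on $\CMA[\boldx]$. The reverse direction of the matrix theorem should follow either from a direct witness construction (using a negative minor of $A$ to build a stable polynomial whose image fails stability) or by extracting total nonnegativity of $A$ from the Borcea--\Branden classification of stability-preserving operators.
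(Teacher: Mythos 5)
Your overall architecture (reduce to the matrix action, transport $x_1\cdots x_k$ by a group element for one direction, extract a common phase for the other) is the paper's, but the forward direction as written has a genuine gap. You assert that every $V \in \GrTNN(k,n)$ can be written as $AV_0$ with $A \in \MatTNN(n\times n)$, citing Loewner--Whitney plus the cell decomposition. This is not a routine consequence of those results: the Marsh--Rietsch parameterization writes a point of a boundary cell as (product of Chevalley generators)$\cdot \dot{w}V_0$, and the permutation matrix $\dot{w}$ is not totally nonnegative, so one must separately argue that each coordinate subspace $\dot{w}V_0$ is itself $A'V_0$ for a (singular) totally nonnegative $A'$ before composing. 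The paper explicitly sidesteps this: it only uses Lusztig's statement $\GrTP(k,n) = \{AV_0 : A \in \MatTP(n\times n)\}$, which handles the totally \emph{positive} case, and then passes to $\GrTNN(k,n)$ as the closure of $\GrTP(k,n)$, using that the set of multiaffine stable polynomials is closed. That limiting argument is both easier and complete; you should adopt it (or else actually prove the transitivity claim for $\MatTNN$, which is more work than the theorem deserves). Note also that your whole forward direction rests on Theorem~\ref{thm:matrix}, which you only sketch; your sketch does match the paper's proof (Loewner--Whitney generators, with the elementary step $f \mapsto f + t\,x_i(\partial_j f)|_{x_i=0}$ handled via the Borcea--\Branden symbol, which reduces to a four-variable quadratic as in Example~\ref{ex:4vars}), but as submitted it is a plan, not a proof.

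Your converse direction is correct and genuinely different from the paper's. The paper simply invokes the phase theorem of Choe--Oxley--Sokal--Wagner: a homogeneous stable polynomial has all coefficients of common phase, so the \Plucker coordinates can be scaled to be nonnegative. You instead reprove the needed special case by depolarization: $\partial_S f$ is a stable linear form whose nonzero coefficients share a phase, and connectivity of the matroid basis-exchange graph on the support of $V$ propagates a single global phase. This is valid (partial derivatives do preserve stability, and the linear-form case of the phase claim is elementary), and it has the minor virtue of being self-contained; the cost is that you must be careful to restrict the exchange-graph argument to subsets $S$ contained in at least one basis, and to note that the support of a point of $\Gr(k,n)$ is the basis set of a matroid so that exchange-connectivity applies. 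Either route is fine here; citing the phase theorem is shorter.
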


The ``phase theorem'' of Choe, Oxley, Sokal, and Wagner
\cite[Theorem 6.1]{COSW}
asserts that if $f(\boldx)
\in \CC[\boldx]$ is stable and homogeneous, then all of its coefficients 
have
the same complex phase, i.e. there is a scalar $\alpha \in \CC^\times$
such that all terms of $\alpha f(\boldx)$ have nonnegative real
coefficients.  The ``only if'' direction of 
Theorem~\ref{thm:grassmannian} is an immediate consequence.
In general, however, the converse of the phase theorem is false: 
for example, $x_1x_2 + x_3x_4$ is not stable.  
Although there are necessary
and sufficient criteria for a polynomial to be stable
(see Theorem~\ref{thm:stabletest}), they 
can be cumbersome to use in practice, and they do not readily 
yield an explicit description 
of the set of stable polynomials as a semialgebraic set.  
It is therefore interesting and
surprising that adding a well-known algebraic condition on the
coefficients (the Pl\"ucker relations) reduces the problem of
testing stability to a simple nonnegativity condition.
This can be seen quite explicitly in the case 
$k=2$, $n=4$; here, the necessary and sufficient 
conditions for stability are tractable, and the Pl\"ucker 
relation trivializes them (see Remark~\ref{rmk:gr24}).

A point $V \in \Gr(k,n)$ determines a representable matroid of 
rank $k$ on the set $[n]$, by taking
the bases to be the indices of the nonzero Pl\"ucker coordinates.
If $V$ is totally nonnegative, this
matroid is called a \defn{positroid}.
The class of positroids is combinatorially well-behaved compared to 
the class of representable matroids.  For example, positroids 
can be enumerated \cite{Williams}.  
Recently, Marcott showed 
that positroids have the \defn{Rayleigh property}
\cite{Marcott}, a property
of matroids closely related to theory stable polynomials.  
This result indicates another relationship between $\GrTNN(n,k)$ and
stable polynomials; it has a similar flavour to
Theorem~\ref{thm:grassmannian},
but neither theorem implies the other.

To prove Theorem~\ref{thm:grassmannian}, 
we establish a second connection between the
theory of stable polynomials and total nonnegativity.
Recall that a matrix $A \in \Mat(n \times n)$ is \defn{totally nonnegative}
if all minors of $A$ are nonnegative.

Let $\Lambda[\boldx]$ denote the complex exterior algebra
generated by $\boldx$, with multiplication denoted $\wedge$, and 
relations 
$ x_i \wedge x_j + x_j \wedge x_i = 0$, for $i, j \in [n]$.
If $I = \{i_1 < i_2 < \dots < i_k\} \subset [n]$, write
$\boldx^{\wedge I} := x_{i_1} \wedge x_{i_2} \wedge \dots \wedge x_{i_k}$.
There is a unique vector space isomorphism
$\xi : \CMA[\boldx] \to \Lambda[\boldx]$
such that $\xi(\boldx^I) = \boldx^{\wedge I}$.
Since $\Lambda[\boldx]$ is a $\Mat(n \times n)$-algebra, 
this isomorphism gives us a linear action of
$\Mat(n \times n)$ on $\CMA[\boldx]$.
Specifically, for $A \in \Mat(n \times n)$,
we have a linear endomorphism
$A_\# : \CMA[\boldx] \to \CMA[\boldx]$,
\[
   A_\# f(\boldx) :=  \xi^{-1} (A \xi(f(\boldx)))\,,
\]
where $A x_j := \sum_{i \geq 0} A_{ij} x_j$, and
$A (x_{j_1} \wedge \dots \wedge x_{j_k}) 
  := Ax_{j_1} \wedge \dots \wedge A x_{j_k}$.
An example of this construction
is given in~\eqref{eqn:2x2}
as part of the the proof of Lemma~\ref{lem:2x2}.

At first glance, the definition of $A_\#$ seems absurd:
we have made a linear identification between part of a 
commutative algebra and a 
supercommutative algebra.  In fact, this issue was already present when
we took Pl\"ucker coordinates as coefficients of a polynomial.
The intuition here is that the difference between these two structures
is in the signs; when we restrict our attention to totally nonnegative 
matrices, or the totally nonnegative part of the Grassmannian, the 
signs are all positive, and the two structures become compatible.

\begin{theorem}
\label{thm:matrix}
For $A \in \Mat(n \times n)$, the following are equivalent:
\begin{packedenum}
\item[(a)]
$A$ is totally nonnegative;
\item[(b)]
for every stable polynomial $f(\boldx) \in \CMA[\boldx]$,
$A_\# f(\boldx)$ is stable.
\end{packedenum}
\end{theorem}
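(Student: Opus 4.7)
The plan is to handle the two implications separately. For (b)$\Rightarrow$(a), I will apply the Borcea--\Branden characterization of stability preservers together with the phase theorem of \cite{COSW}. For (a)$\Rightarrow$(b), I will use the Loewner--Whitney factorization of totally nonnegative matrices and then verify stability preservation for each elementary factor. The main technical obstacle is the bidiagonal-factor step.

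For (b)$\Rightarrow$(a), expanding the exterior-algebra definition of $A_\#$ applied to $\prod_{i=1}^n (x_i+y_i) = \sum_I \boldx^I\boldy^{I^c}$ yields the symbol
\[
   A_\#\Big[\prod_{i=1}^n (x_i+y_i)\Big]
   \;=\; \sum_{|I|=|J|} \det(A[J,I])\, \boldx^J\boldy^{I^c}.
\]
By the Borcea--\Branden theorem, $A_\#$ preserves stability if and only if this polynomial in the $2n$ variables $(\boldx,\boldy)$ is stable. The phase theorem then forces all of its coefficients to share a common complex phase; since the coefficient of $\boldy^{[n]}$ (from $I = J = \emptyset$) is the empty determinant~$1$, the phase must be the nonnegative real axis. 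Hence every minor of $A$ is nonnegative, so $A$ is totally nonnegative.

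For (a)$\Rightarrow$(b), the Loewner--Whitney theorem writes any totally nonnegative matrix as a (possibly limiting) product of nonnegative diagonal matrices and elementary bidiagonals $I+tE_{i,i+1}$ and $I+tE_{i+1,i}$ with $t\geq 0$. Since $A\mapsto A_\#$ is a monoid homomorphism, and the class of stability preservers on $\CMA[\boldx]$ is closed under composition and under pointwise limits (by Hurwitz's theorem), it suffices to verify each factor type. A nonnegative diagonal matrix $D$ acts by $f(\boldx)\mapsto f(d_1 x_1,\ldots,d_n x_n)$, which plainly preserves stability (with Hurwitz handling any zero entries). For a bidiagonal factor $A=I+tE_{i,i+1}$ (the other direction symmetric), an exterior-algebra computation gives
\[
   A_\# f \;=\; f + t\, x_i\, \partial_{x_{i+1}}(f|_{x_i=0}).
\]

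By specializing the variables other than $x_i,x_{i+1}$ at an arbitrary point of $\calH^{n-2}$, the problem reduces to the two-variable statement: if $f(x_1,x_2) = a + bx_1 + cx_2 + dx_1 x_2$ is stable and $t \geq 0$, then $f + tc\, x_1$ is stable. The key device is the auxiliary polynomial $\phi(x_1,x_2) := f(x_1,\, x_2 + tx_1)$, which is stable because the substitution $(x_1,x_2)\mapsto(x_1, x_2+tx_1)$ preserves $\calH^2$ when $t\geq 0$, and which satisfies $\phi = (f + tc\, x_1) + td\, x_1^2$. For each fixed $x_2\in\calH$, stability of $\phi$ places both roots $x_1^{(1)}, x_1^{(2)}$ of the quadratic $\phi(\,\cdot\,, x_2)$ in $\overline{-\calH}$; Vieta's formulas identify the unique root $x^\ast$ of the linear polynomial $(f + tc\,x_1)(\,\cdot\,, x_2)$ via the harmonic-sum identity
\[
   \frac{1}{x^\ast} \;=\; \frac{1}{x_1^{(1)}} + \frac{1}{x_1^{(2)}}.
\]
Each summand lies in $\overline{\calH}$ (since $1/z\in\overline{\calH}$ whenever $z\in\overline{-\calH}$), so $x^\ast \in \overline{-\calH}$, meaning $f + tc\,x_1$ has no zero in $\calH^2$. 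A few degenerate cases ($t = 0$, $d = 0$, or a root of $\phi$ at $0$) are handled directly, using the phase theorem applied to $f$ to rule out the apparent obstruction at $x_2 = -a/c$. It is this harmonic-sum identity that bridges the non-multiaffine substitution $\phi$ and the multiaffine projection $A_\# f$.
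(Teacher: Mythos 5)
Your proposal reaches the theorem, and the interesting half takes a genuinely different route from the paper's. For (b)$\Rightarrow$(a) you do essentially what the paper does (compute the Borcea--\Branden symbol of $A_\#$, observe its coefficients are the minors of $A$, apply the phase theorem after normalizing by the coefficient $1$ of $\boldy^{[n]}$); the only thing to add is a word ruling out alternative (a) of the Borcea--\Branden dichotomy --- the paper notes that $A_\#$ has rank at least $2$ unless $A=0$, so the rank-one case cannot arise. For (a)$\Rightarrow$(b) you share the paper's skeleton (Loewner--Whitney generators, closure of stability preservers under composition and limits, reduction to a $2\times2$ block acting on two of the variables), but where the paper disposes of the two-variable case by forming the symbol of $Q_\#$ and testing its stability with \Branden's $\Delta_{ij}$ criterion --- the discriminant computation of Example~\ref{ex:4vars}, which collapses to $-4bc(ad-bc)\le 0$ --- you argue directly from the definition of stability: the shear $x_2\mapsto x_2+tx_1$ preserves $\calH^2$, and the harmonic-sum identity $1/x^{\ast}=1/r_1+1/r_2$ transfers root location from the stable quadratic $\phi(\cdot,x_2)$ to the linear polynomial $(f+tc\,x_1)(\cdot,x_2)$. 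This is more elementary (no appeal to Theorem~\ref{thm:stabletest} or to Proposition~\ref{prop:true}) at the cost of some case analysis.

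One step of that case analysis is not yet a proof. The genuinely dangerous degenerate case is when some $u\in\calH$ satisfies both $a+cu=0$ and $b+tc+du=0$, for then $(f+tc\,x_1)(\cdot,u)\equiv 0$ and the conclusion would actually be \emph{false}; you must show this cannot happen, and your proposed tool --- ``the phase theorem applied to $f$'' --- is not available, since $f$ is not homogeneous after specializing the other $n-2$ variables. The case is vacuous for a different reason: the degenerate case forces $c\neq 0$, and specializing $x_1=i\epsilon$ with $\epsilon>0$ shows that the root $-(a+ib\epsilon)/(c+id\epsilon)$ of $f(i\epsilon,\cdot)$ avoids $\calH$; letting $\epsilon\to 0^{+}$ and using that $\CC\setminus\calH$ is closed gives $-a/c\notin\calH$, contradicting $u=-a/c\in\calH$. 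With that substitution (and the routine remark that $Q_\#$ is injective, so $A_\#f$ cannot vanish identically on a slice $\calH^2\times\{u\}$ unless $f$ does, which stability of $f$ forbids), your argument closes.
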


In Section~\ref{sec:stable}, we recall some of the major results from
the theory of stable polynomials.
We then apply this theory to obtain a key lemma, which is roughly
the $n=2$ case of Theorem~\ref{thm:matrix}.
In Section~\ref{sec:positivity}, we discuss some pertinent elements 
of the theory of total nonnegativity and total positivity, for
matrices and for the Grassmannian.  We use these, and our results
from Section~\ref{sec:stable} to prove Theorems~\ref{thm:grassmannian} 
and~\ref{thm:matrix}.  
In Section~\ref{sec:misc} we look at a handful of related results, 
including other families of homogeneous multiaffine stable polynomials,
a family of infinitesimal stability preservers, and
a slightly stronger version of the phase theorem.

\paragraph{Acknowledgements}
The author thanks Cameron Marcott and David Wagner for conversations
that inspired this work.  This research was partially supported by an
NSERC Discovery Grant.


\section{Multiaffine stable polynomials}
\label{sec:stable}

We begin with an example, in which we determine necessary and
sufficient conditions for a degree $2$ homogeneous polynomial 
in $4$ variables to be stable.
This turns out to be the fundamental brute-force calculation 
needed to prove our main theorems.
To obtain such conditions, we use the following criterion for stability
of multiaffine polynomials with real coefficients.  

\begin{theorem}[\Branden \cite{Branden}]
\label{thm:stabletest}
If $f(\boldx) \in \RMA[\boldx]$ is a multiaffine polynomial with
real coefficients, 
define 
\[
  \Delta_{ij} f(\boldx) := 
\tfrac{\partial}{\partial x_i} f(\boldx)  \cdot
\tfrac{\partial}{\partial x_j} f(\boldx) 
-
f(\boldx)  \cdot
\tfrac{\partial^2}{\partial x_i \partial x_j} f(\boldx) 
\,.
\]
Then $f(\boldx)$ is stable if and only if
$\Delta_{ij}f : \RR^n \to \RR$ is a nonnegative function
for all $i,j \in [n]$, $i \neq j$.
\end{theorem}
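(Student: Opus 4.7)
The plan is to prove both directions. The ``only if'' direction reduces cleanly to a bivariate calculation; the ``if'' direction is the main technical part.

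For the ``only if'' direction, fix distinct $i, j$ and use multiaffineness to write $f = A x_i x_j + B x_i + C x_j + D$, where $A, B, C, D$ are multiaffine real polynomials in the other $n-2$ variables. A short differentiation gives $\Delta_{ij} f = BC - AD$, which is independent of $x_i$ and $x_j$. For each $\boldy \in \RR^{n-2}$, specializing the remaining variables to $\boldy$ preserves stability, yielding a stable bivariate polynomial $g(x_i, x_j) = \alpha x_i x_j + \beta x_i + \gamma x_j + \delta$ with real coefficients and $\Delta_{ij} f(\boldy) = \beta\gamma - \alpha\delta$. When $\alpha \neq 0$, the equation $g = 0$ rearranges to $(x_i + \gamma/\alpha)(x_j + \beta/\alpha) = (\beta\gamma - \alpha\delta)/\alpha^2$. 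A polar-coordinate check shows $\{uv : u, v \in \calH\} = \CC \setminus [0, \infty)$, so stability of $g$ forces $(\beta\gamma - \alpha\delta)/\alpha^2 \geq 0$, hence $\Delta_{ij}f(\boldy) \geq 0$. The case $\alpha = 0$ reduces to an elementary linear analysis with the same conclusion.

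For the ``if'' direction, the base case $n = 2$ follows by reversing the bivariate analysis above: $\beta\gamma - \alpha\delta \geq 0$ places the required target value in $[0, \infty)$, which is disjoint from $\calH \cdot \calH$, so $g$ cannot vanish on $\calH^2$. For $n \geq 3$, a naive inductive reduction is subtle: even if every specialization $f(\cdot, c)$ with $c \in \RR$ is stable, this does not imply $f$ is stable (the non-stable polynomial $x_1 - x_2$ has stable real specializations in each variable), and specializing $x_n \in \calH$ produces complex coefficients to which the inductive hypothesis does not apply.

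The main obstacle is therefore finding the right reduction. The approach I would pursue is a continuity argument based on the univariate characterization that a real multiaffine $f$ is stable if and only if $p_{\bolda, \boldb}(t) := f(\bolda + t\boldb)$ is real-rooted for every $\bolda \in \RR^n$ and $\boldb \in \RR^n_{>0}$. The idea is to first establish the result under the strict hypothesis $\Delta_{ij} f > 0$ on $\RR^n$ by finding an algebraic identity expressing the discriminant of $p_{\bolda, \boldb}$ (or a related positivity certificate for its real-rootedness) as a sum of nonnegative terms built from the $\Delta_{ij} f$, and then to recover the general (non-strict) case by approximating $f$ with perturbations $f_\epsilon$ for which the strict hypothesis holds and applying Hurwitz's theorem in the limit $\epsilon \to 0$. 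The algebraic identity is the heart of the argument and is where I expect the technical difficulty to concentrate.
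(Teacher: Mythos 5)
This statement is quoted by the paper from \Branden's article \cite{Branden} (Theorem 5.6 there); the paper gives no proof of it, so your attempt can only be judged on its own terms. Your ``only if'' direction is correct and complete: the decomposition $f = Ax_ix_j + Bx_ix_j$-free terms gives $\Delta_{ij}f = BC-AD$, real specialization of the remaining variables preserves stability, and the identity $\{uv : u,v \in \calH\} = \CC \setminus [0,\infty)$ correctly settles the bivariate case in both directions. This matches how one would expect Example~\ref{ex:4vars} and Lemma~\ref{lem:2x2} of the paper to be grounded.

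The ``if'' direction for $n \geq 3$, however, is not a proof but a plan, and the plan has concrete holes beyond the admitted missing ``algebraic identity.'' First, the target of that identity is not well specified: for $\deg p_{\bolda,\boldb} \geq 4$, nonnegativity of the discriminant is \emph{not} equivalent to real-rootedness (a quartic with four non-real roots also has positive discriminant), so even if you produced a sum-of-nonnegative-terms formula for $\Disc(p_{\bolda,\boldb})$ in terms of the $\Delta_{ij}f$, it would not certify real-rootedness; you would need a genuinely different certificate, and you do not say what it is. Second, the reduction to the strict case is itself unjustified: Hurwitz lets you pass stability from $f_\varepsilon$ to $f$, but you must first construct $f_\varepsilon \to f$ with $\Delta_{ij}f_\varepsilon > 0$ everywhere for all $i \neq j$, starting only from $\Delta_{ij}f \geq 0$, and it is not clear such a perturbation exists (e.g.\ when some $\Delta_{ij}f \equiv 0$) or that it can be done without circularly invoking stability of $f$. \Branden's actual argument is an induction on $n$ via the decomposition $f = f_0 + x_nf_1$ and the Hermite--Biehler/proper-position relation between $f_0$ and $f_1$, not a discriminant identity; as it stands, the heart of the sufficiency direction is missing from your write-up.
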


\begin{example} 
\label{ex:4vars}
Let $a_{12}, a_{13}, a_{14}, a_{23}, a_{24}, a_{34} \geq 0$.
Consider the polynomial
\[
  f(\boldx) = 
  a_{12} x_1x_2 + a_{13}x_1x_3 + a_{14} x_1x_4 + a_{23} x_2x_3
+ a_{24} x_2x_4 + a_{34}x_3x_4\,.
\]
By Theorem~\ref{thm:stabletest}, $f(\boldx)$ is stable iff 
$\Delta_{ij} f \geq 0$ for all $i,j$.
We compute 
\begin{equation}
\label{eqn:delta13}
\Delta_{13} f(\boldx) = 
a_{12} a_{23} x_2^2 + 
(a_{12}a_{34} - a_{13}a_{24} + a_{14}a_{23}) x_2x_4
+ a_{14}a_{34} x_4^2\,.
\end{equation}
Since $a_{ij} \geq 0$, $\Delta_{13} f$ is nonnegative if and only
if its discriminant is nonpositive, i.e.
\begin{equation}
\label{eqn:4vars}
   a_{12}^2 a_{34}^2 
   + a_{13}^2 a_{24}^2 
   + a_{14}^2 a_{23}^2 
   -2 a_{12} a_{34} a_{13} a_{24} 
   -2 a_{13} a_{24} a_{14} a_{23} 
   -2 a_{12} a_{34} a_{14} a_{23} 
   \leq 0 
\,.
\end{equation}
Since this expression is invariant under permutations of $[4]$, we
obtain the same inequality for every other pair of 
indices $i,j \in [4]$.
Hence the inequality \eqref{eqn:4vars} 
is a necessary and sufficient condition for
$f(\boldx)$ be be stable.
\end{example}

\begin{remark} 
\label{rmk:gr24} 
$\Gr(2,4)$ is defined by a single \Plucker relation:
$a_{12}a_{34} - a_{13}a_{24} + a_{14}a_{23} = 0$.  If this holds,
then \eqref{eqn:delta13} is clearly nonnegative, and so
\eqref{eqn:4vars} holds.  This proves the $\Gr(2,4)$ case of
Theorem~\ref{thm:grassmannian}.
However, in general, it is not straightforward
to deduce Theorem~\ref{thm:grassmannian} from
Theorem~\ref{thm:stabletest} using the \Plucker relations.
\end{remark}

A $\CC$-linear map satisfying condition (b) of Theorem~\ref{thm:matrix}
is called a \defn{stability preserver}.  As part of their vast
generalization of the P\'olya--Schur theorem, Borcea and \Branden 
proved that there is an equivalence between stability preservers,
and stable polynomials in twice as many variables.
We state only the multiaffine case of their theorem, as we will
not need the result in its full generality.

\begin{theorem}[Borcea--\Branden \cite{BB1}]
\label{thm:master}
Let $\phi : \CMA[\boldx] \to \CMA[\boldx]$ be a $\CC$-linear map.
Then $\phi$ is a stability preserver if and only if one the following
is true:
\begin{packedenum}
\item[(a)] there is a linear functional $\eta : \CMA[\boldx] \to \CC$
and a stable polynomial $g(\boldx) \in \CMA[\boldx]$ such that
$\phi f(\boldx) = \eta (f(x)) g(\boldx)$;  or
\item[(b)] 
$\phi \big(\prod_{i=1}^n(x_i + y_i)\big) \in 
\CC[\boldx, \boldy]$ is stable.
\end{packedenum}
\end{theorem}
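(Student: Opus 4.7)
The plan is to exploit the \emph{symbol} $\Phi(\boldx, \boldy) := \phi\!\left(\prod_i(x_i + y_i)\right)$, in which $\phi$ acts on the $\boldx$-variables with $\boldy$ treated as scalars. Expanding $\prod_i(x_i + y_i) = \sum_{I \subseteq [n]} \boldx^I \boldy^{[n]\setminus I}$ gives $\Phi(\boldx, \boldy) = \sum_I \phi(\boldx^I) \boldy^{[n]\setminus I}$, which encodes $\phi$ completely. A direct computation yields the bridging identity
\[
\phi f(\boldx) \;=\; f^{\ast}(\partial_\boldy)\,\Phi(\boldx, \boldy)\Big|_{\boldy=0},
\]
where for $f(\boldx) = \sum_I c_I \boldx^I$ we set $f^{\ast}(\boldy) := \sum_I c_I \boldy^{[n]\setminus I} = \boldy^{[n]}\, f(1/y_1, \dots, 1/y_n)$, the ``reverse'' polynomial. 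This links the symbol $\Phi$ directly to the action of $\phi$ on arbitrary multiaffine inputs.

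Easy direction ((a) or (b) implies a stability preserver). Case (a) is immediate: $\phi f = \eta(f)\,g$ is either $0$ or a nonzero scalar multiple of the stable polynomial $g$. For case (b), note that since $y \mapsto 1/y$ swaps the upper and lower half planes, $f$ stable implies $f^{\ast}$ is stable with respect to the lower half plane. Standard results on stability preservation under multivariate polynomial differentiation then show $f^{\ast}(\partial_\boldy)$ is itself a stability preserver, so $f^{\ast}(\partial_\boldy)\,\Phi$ is stable in $(\boldx, \boldy)$ whenever $\Phi$ is. Finally, specializing $\boldy = 0$ (a real boundary value) preserves stability or produces $0$. Hence $\phi f$ is stable whenever $f$ is.

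Hard direction (stability preserver implies (a) or (b)). For any $\boldw \in \calH^n$, the polynomial $\prod_i(x_i + w_i)$ has roots only at $x_i = -w_i$, which lie in the lower half plane, so it is stable in $\boldx$. Since $\phi$ preserves stability, $\Phi(\boldx, \boldw)$ is either stable in $\boldx$ or identically zero. If $\Phi(\cdot, \boldw) \not\equiv 0$ for every $\boldw \in \calH^n$, then $\Phi$ has no zeros on $\calH^n \times \calH^n = \calH^{2n}$ and is jointly stable, giving case (b). Otherwise, some $\boldw \in \calH^n$ yields $\Phi(\cdot, \boldw) \equiv 0$, and one must show $\phi$ has rank at most one, giving case (a). This final deduction is the main obstacle: my approach would be to apply Hurwitz's theorem to one-parameter deformations of $\boldw$ inside $\calH^n$ and combine with stability preservation of $\phi$ along those families, extracting further independent linear relations on $\{\phi(\boldx^I)\}_I$ until the rank collapses to one; the polynomial $g$ of case (a) is then the common image direction, which inherits stability from $\phi$ applied to any stable $f$ with $\eta(f) \neq 0$.
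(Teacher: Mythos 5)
The paper does not prove this theorem: it is quoted directly from Borcea--\Branden \cite{BB1}, so there is no internal proof to compare against, and your proposal must be judged as a free-standing proof of the Borcea--\Branden multiaffine symbol theorem. Your architecture is the standard one and much of it is sound: the bridging identity $\phi f(\boldx) = f^{*}(\partial_{\boldy})\Phi(\boldx,\boldy)\big|_{\boldy=0}$ is correct (only the diagonal terms $I=J$ survive the evaluation at $\boldy=0$), and in the hard direction the dichotomy --- either $\Phi(\cdot,\boldw)\not\equiv 0$ for every $\boldw\in\calH^n$, in which case $\Phi$ is nonvanishing on $\calH^{2n}$ and (b) holds, or some $\boldw_0\in\calH^n$ annihilates it --- is exactly right. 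One caution on the easy direction: the ``standard result'' you invoke should be pinned down as the (iterated) Lieb--Sokal lemma, i.e.\ that $g(\partial_{\boldy})$ preserves stability when $g(-\boldy)$ is stable; the general theorem on stability-preserving differential operators is itself usually derived from the theorem you are proving, so an unqualified appeal risks circularity.

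The genuine gap is the degenerate branch of the hard direction. You must show that if $\Phi(\cdot,\boldw_0)\equiv 0$ for a single $\boldw_0\in\calH^n$, then $\phi$ has rank at most one with stable image, and you explicitly defer this (``apply Hurwitz's theorem to one-parameter deformations \dots until the rank collapses to one''). This is not a detail: a single vanishing $\sum_{J}\phi(\boldx^J)\,\boldw_0^{[n]\setminus J}=0$ is one linear relation among the $2^n$ polynomials $\phi(\boldx^J)$, whereas rank one requires $2^n-1$ independent relations, and Hurwitz's theorem controls limits of nonvanishing families rather than forcing identical vanishing to propagate as $\boldw$ moves in $\calH^n$. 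Establishing that the degeneracy locus is large enough to collapse the rank is precisely the technical core of the Borcea--\Branden proof and occupies a substantial argument there; without it (or an equivalent finite-dimensional substitute), your proposal is an outline of the known proof rather than a proof.
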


We will refer to stability preservers satisfying (a) as
\defn{rank-one stability preservers}, and those satisfying (b)
as \defn{true stability preservers}.
In (b), we are implicitly extending $\phi$ from a $\CC$-linear map 
$\CMA[\boldx] \to \CMA[\boldx]$ to the unique $\CC[\boldy]$-linear map
$\phi : \CMA[\boldx, \boldy] \to \CMA[\boldx, \boldy]$ that
agrees with the original $\phi$ on $\CC[\boldx]$.
An important property of true stability preservers is that
they are preserved by this natural type of extension.

\begin{proposition}
\label{prop:true}
A linear map $\phi : \CMA[\boldx] \to \CMA[\boldx]$
is a true stability preserver if and only if for any additional set 
of variables $\boldz = (z_1, \dots, z_m)$, the 
$\CC[\boldz]$-linear extension 
$\phi : \CMA[\boldx,\boldz] \to \CMA[\boldx, \boldz]$ 
is a true stability preserver.
\end{proposition}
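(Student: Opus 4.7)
The plan is to observe that the ``if'' direction is trivial (take $m = 0$, so that the extension equals $\phi$ itself) and to focus entirely on the ``only if'' direction: assuming $\phi$ is a true stability preserver on $\CMA[\boldx]$, show that for any additional variables $\boldz = (z_1,\dots,z_m)$, the $\CC[\boldz]$-linear extension $\tilde\phi : \CMA[\boldx,\boldz] \to \CMA[\boldx,\boldz]$ satisfies condition (b) of Theorem~\ref{thm:master}.

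To verify (b) for $\tilde\phi$, I would introduce two sets of auxiliary variables $\boldy = (y_1,\dots,y_n)$ and $\boldw = (w_1,\dots,w_m)$, and extend $\tilde\phi$ to be $\CC[\boldy,\boldw]$-linear as per the convention preceding the proposition. The task is to show that
\[
   \tilde\phi\Bigl(\prod_{i=1}^n (x_i + y_i) \prod_{j=1}^m (z_j + w_j)\Bigr) \in \CC[\boldx,\boldy,\boldz,\boldw]
\]
is stable. Expanding both products over subsets $I \subseteq [n]$ and $J \subseteq [m]$, and using the fact that $\tilde\phi$ was defined to act only on the $\boldx$-monomials while treating $\boldz^J$ (and, after extension, $\boldy^{[n]\setminus I} \boldw^{[m]\setminus J}$) as scalars, the image factors cleanly as
\[
   \phi\Bigl(\prod_{i=1}^n (x_i+y_i)\Bigr) \cdot \prod_{j=1}^m (z_j+w_j).
\]

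The first factor is stable by the hypothesis that $\phi$ is a true stability preserver; the second factor is a product of the linear polynomials $z_j + w_j$, each of which is manifestly stable since $\mathrm{Im}(z_j + w_j) > 0$ whenever $z_j, w_j \in \calH$. Since the product of stable polynomials is stable (a standard fact; see \cite{Wagner-survey}), the displayed polynomial is stable, establishing condition (b) for $\tilde\phi$. There is no serious obstacle here: the only point requiring any care is the factorization step, which is just a matter of tracking the definition of the extension — the $\boldz$ variables pass through $\tilde\phi$ untouched, and the two sets of auxiliary variables $\boldy,\boldw$ are introduced independently.
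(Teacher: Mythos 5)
Your proof is correct and follows essentially the same route as the paper's: both reduce to the factorization $\tilde\phi\bigl(\prod_i(x_i+y_i)\prod_j(z_j+w_j)\bigr) = \phi\bigl(\prod_i(x_i+y_i)\bigr)\cdot\prod_j(z_j+w_j)$ and the fact that a product of stable polynomials is stable. The only cosmetic difference is that you dispose of the ``if'' direction by taking $m=0$, whereas the paper runs the same factorization as a two-way equivalence (using that $h$ is stable iff $h\cdot\prod_j(z_j+w_j)$ is).
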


\begin{proof}
By definition,
$\phi : \CMA[\boldx] \to \CMA[\boldx]$ is a true stability preserver
iff 
\[
  h(\boldx,\boldy) 
  = \phi \big(\textstyle \prod_{i=1}^n(x_i + y_i)\big)
\]
is stable.
The extension $\phi : \CMA[\boldx,\boldz] \to \CMA[\boldx, \boldz]$ 
is a true stability preserver iff
\[
\phi \big(\textstyle \prod_{i=1}^n(x_i+y_i) \cdot
\prod_{j=1}^m (z_j + w_j)\big)
= h(\boldx, \boldy) \prod_{j=1}^m (z_j + w_j)
\] 
is stable.  It is straightforward to verify that
$h(\boldx, \boldy) \in \CC[\boldx, \boldy]$ 
is stable if and only if $h(\boldx, \boldy) \prod_{j=1}^m (z_j + w_j)
 \in \CC[\boldx, \boldy, \boldz, \boldw]$ is stable. The result follows.
\end{proof}

In general, rank-one stability preservers do not have this extendability
property, unless they are also true stability preservers.

The set of stable polynomials in $\CMA[\boldx]$ is closed.
It follows 
that the set of
true stability preservers $\CMA[\boldx] \to \CMA[\boldx]$,
being linearly equivalent 
to the set of stable polynomials in $\CMA[\boldx, \boldy]$,
is also closed.  These facts will be used in the next section.

We conclude this section by using Theorem~\ref{thm:master} to
prove the following lemma, which is almost-but-not-quite the $n=2$
case of Theorem~\ref{thm:matrix}.

\begin{lemma}
\label{lem:2x2}
If $Q \in \Mat(2 \times 2)$ is totally nonnegative, then
$Q_\# : \CMA[x_1,x_2] \to \CMA[x_1,x_2]$ is a true stability preserver.
\end{lemma}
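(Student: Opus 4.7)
The plan is to invoke Theorem~\ref{thm:master}(b), which reduces the problem to verifying that the four-variable polynomial
\[
h(\boldx, \boldy) := Q_\#\bigl((x_1+y_1)(x_2+y_2)\bigr)
\]
is stable. Writing $Q = \bigl(\begin{smallmatrix}a & b\\ c & d\end{smallmatrix}\bigr)$, I would begin by unwinding the definition of $Q_\#$ via the exterior algebra: since $Qx_1 \wedge Qx_2 = (ad-bc)\, x_1 \wedge x_2$, the action of $Q_\#$ on the basis $\{1, x_1, x_2, x_1x_2\}$ of $\CMA[x_1, x_2]$ is $Q_\#(1) = 1$, $Q_\#(x_1) = ax_1 + cx_2$, $Q_\#(x_2) = bx_1 + dx_2$, and $Q_\#(x_1 x_2) = (ad-bc)\, x_1 x_2$. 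Extending $\CC[\boldy]$-linearly and expanding the product $(x_1+y_1)(x_2+y_2) = x_1 x_2 + x_1 y_2 + y_1 x_2 + y_1 y_2$, a short calculation then gives
\[
h = (ad-bc)\, x_1 x_2 + a\, x_1 y_2 + b\, x_1 y_1 + c\, x_2 y_2 + d\, x_2 y_1 + y_1 y_2.
\]

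The key observation is that $h$ is a homogeneous multiaffine polynomial of degree $2$ in four variables with nonnegative coefficients: the entries $a,b,c,d$ are nonnegative, and $ad - bc = \det(Q)$ is the unique $2 \times 2$ minor of $Q$, hence also nonnegative. Relabeling so that $(x_1, x_2, y_1, y_2)$ plays the role of $(x_1, x_2, x_3, x_4)$ in Example~\ref{ex:4vars}, the criterion developed there applies directly, with
\[
a_{12} = ad-bc,\quad a_{13} = b,\quad a_{14} = a,\quad a_{23} = d,\quad a_{24} = c,\quad a_{34} = 1.
\]

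Finally, I would substitute these values into the inequality \eqref{eqn:4vars} and simplify. I expect the left-hand side to collapse, after cancellation of the $a^2 d^2$ terms and collection of the cross terms, to $-4bc(ad-bc) = -4bc\det(Q)$, which is nonpositive because $b, c \geq 0$ and $\det(Q) \geq 0$. This verifies \eqref{eqn:4vars}, so $h$ is stable and Theorem~\ref{thm:master}(b) delivers the conclusion. The only real obstacle is executing the discriminant expansion without a sign error --- this is precisely the ``fundamental brute-force calculation'' that Example~\ref{ex:4vars} was designed to carry out in a neutral setting, so invoking it here amounts to making the right substitution.
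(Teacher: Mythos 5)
Your proposal is correct and follows essentially the same route as the paper: reduce via Theorem~\ref{thm:master}(b) to the stability of $h(\boldx,\boldy)=Q_\#\bigl((x_1+y_1)(x_2+y_2)\bigr)$, observe its coefficients are the entries and the determinant of $Q$, and apply the criterion of Example~\ref{ex:4vars}, where \eqref{eqn:4vars} collapses to $-4bc(ad-bc)\leq 0$. The only difference is an immaterial relabeling of the entries of $Q$, and the expansion you deferred does indeed simplify exactly as you predict.
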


\begin{proof}
Write $Q = \left( \begin{smallmatrix} a & c \\ b & d\end{smallmatrix}\right)$.
Then we have
\begin{equation}
\label{eqn:2x2}
\begin{aligned}
Q_\#(1) &= 1 
\\
Q_\#(x_1) &= a x_1 + bx_2
\\
Q_\#(x_2) &= c x_1 + dx_2
\\
Q_\#(x_1x_2) &= (ad-bc) x_1x_2\,.
\end{aligned}
\end{equation}
Thus $Q_\#$ is a true stability preserver if and only if
\[
   h(\boldx, \boldy) 
   = y_1y_2 + a x_1y_2 + b x_2y_2 + cx_1y_1 + d x_2y_1 + (ad-bc)x_1x_2
\]
is stable.  

Now assume $Q$ is totally nonnegative.  Then all coefficients
coefficients of $h(\boldx, \boldy)$ are nonnegative.  
As we saw in Example~\ref{ex:4vars}, $h(\boldx,\boldy)$ is stable if 
and only
if the inequality \eqref{eqn:4vars} holds, which in this case 
amounts to
\[
    (ad-bc)^2 + a^2d^2 + b^2c^2 - 2ad(ad-bc) -2bc(ad-bc) -2adbc \leq 0
\,,
\]
or equivalently
\[
   -4bc(ad-bc) \leq 0\,.
\]
Since $b \geq 0$, $c \geq 0$, and $ad-bc \geq 0$,
the result follows.
\end{proof}


\section{Total positivity}
\label{sec:positivity}

A matrix $A \in \Mat(n\times n)$ is \defn{totally positive} if all of its
minors are strictly positive.  We denote the set of totally positive
$n \times n$ matrices by $\MatTP(n \times n)$, and we denote the
set of totally nonnegative matrices 
by $\MatTNN(n \times n)$.
Lying between these is the set 
$\GLTNN(n) = \MatTNN(n \times n) \cap \GL(n)$ 
of invertible totally nonnegative matrices.  Each of the
sets $\MatTNN(n \times n)$, $\MatTP(n \times n)$ and $\GLTNN(n)$
is a multiplicative semigroup, i.e. closed under matrix 
multiplication.  We have containments
\[
  \MatTP(n \times n) \subset \GLTNN(n) \subset \MatTNN(n \times n)
\]
and
$\MatTNN(n \times n)$ is the closure of all of these sets
\cite{Whitney}.

The Loewner--Whitney theorem \cite{Loewner}
describes the generators of $\GLTNN(n)$.
Let
\[
D_i(t) :=
\begin{pmatrix}
1 & 0 & \cdots  & 0 &  \cdots  & 0 & 0  \\
0 & 1 & \cdots   & 0 & \cdots   & 0 & 0  \\
\vdots  & \vdots  & \ddots  &   &          &\vdots   & \vdots \\
0 & 0 &         & t &          & 0 & 0  \\
\vdots  &\vdots   &         &   &  \ddots  &\vdots   & \vdots \\
0 & 0 & \cdots  & 0 &  \cdots  & 1 & 0  \\
0 & 0 & \cdots  & 0 &   \cdots & 0 & 1  \\
\end{pmatrix}
\qquad
E_i(t) :=
\begin{pmatrix}
1 & 0 & \cdots  & 0 & 0 &  \cdots  & 0 & 0  \\
0 & 1 & \cdots   & 0 & 0 & \cdots   & 0 & 0  \\
\vdots  & \vdots  & \ddots  & &   &          &\vdots   & \vdots \\
0 & 0 &         & 1 & t &          & 0 & 0  \\
0 & 0 &         & 0 & 1 &          & 0 & 0  \\
\vdots  & \vdots  &   & &   &   \ddots       &\vdots   & \vdots \\
0 & 0 & \cdots  & 0 & 0 &  \cdots  & 1 & 0  \\
0 & 0 & \cdots  & 0 & 0 &   \cdots & 0 & 1  \\
\end{pmatrix}
\]
where in each case, $t$ appears in row $i$; let $F_i(t)$ be
the transpose of $E_i(t)$.
$\GLTNN(n)$ is the semigroup generated by all
$D_i(t)$, $E_i(t)$, $F_i(t)$, $t > 0$.
We use this description to prove Theorem~\ref{thm:matrix}.

\begin{proof}[Proof of Theorem~\ref{thm:matrix}]
We begin with the implication (a) $\Rightarrow$ (b).  
We will show that if $A \in \GLTNN(n)$ then
$A_\#$ is a true stability preserver. 
Since
$\MatTNN(n \times n)$ is the closure of $\GLTNN(n)$,
and the set of true stability preservers 
is closed,
this is implies the result for $A \in \MatTNN(n \times n)$.

Since $\GLTNN(n)$ is a semigroup, and $(AB)_\# = A_\# B_\#$
for $A, B \in \Mat(n \times n)$, it suffices to prove this in
the case where $A$ is a generator for $\GLTNN(n \times n)$,
i.e. one of $D_i(t)$, $E_i(t)$, $F_i(t)$,  $t > 0$.  In each case,
we can write
\[
  A = 
  \begin{pmatrix}
   I_k & 0 & 0 \\
   0 & Q & 0 \\
   0 & 0 & I_{n-k-2} \\
\end{pmatrix}
\]
where $0 \leq k \leq n-2$, and 
$Q = \left( \begin{smallmatrix} a & c \\ b & d\end{smallmatrix}\right)$
is some totally nonnegative $2 \times 2$
matrix.  Observe that 
\begin{equation}
\label{eqn:matrix}
   A_\# \boldx^I =
\begin{cases}
 \boldx^{J} 
    &\quad \text{if $k+1 \notin I$, $k+2 \notin I$}
\\
(ax_{k+1}+bx_{k+2}) \boldx^{J}
    &\quad \text{if $k+1 \in I$, $k+2 \notin I$}
\\
(cx_{k+1}+dx_{k+2}) \boldx^{J}
    &\quad \text{if $k+1 \notin I$, $k+2 \in I$}
\\
(ad-bc)x_{k+1}x_{k+2} \boldx^{J}
    &\quad \text{if $k+1 \in I$, $k+2 \in I$}\,.
\end{cases}
\end{equation}
where $J = I \setminus \{k+1, k+2\}$.  Comparing \eqref{eqn:matrix} 
with \eqref{eqn:2x2}, we see that
$A_\#$ is the unique $\CC[x_1, \dots, x_k, x_{k+3}, \dots, x_n]$-linear
extension of $Q_\# : \CMA[x_{k+1}, x_{k+2}] \to
\CMA[x_{k+1}, x_{k+2}]$.
By
Lemma~\ref{lem:2x2},
$Q_\#$ is a true stability preserver, and therefore 
by Proposition~\ref{prop:true} so is $A_\#$.

For the implication (b) $\Rightarrow$ (a), 
suppose that $A_\#$ is a stability preserver.
If $A$ is the zero matrix, then $A$ is certainly totally nonnegative.
Otherwise, $A_\#$ has rank at least $2$,
so by Theorem~\ref{thm:master}, it must be a true stability preserver,
i.e.
\[
  h(\boldx, \boldy) = A_\# \big( \prod_{i=1}^n(x_i+ y_i)\big)
\]
is stable.  Since $A_\#$ preserves degree, $h(\boldx, \boldy)$ 
is homogeneous of degree $n$, and since $A_\#$ acts trivially on constants,
the coefficient of $\boldy^{[n]}$
in $h(\boldx, \boldy)$ is $1$. Therefore by the phase theorem
all coefficients of $h(\boldx, \boldy)$ must be nonnegative.
More generally the coefficient of $\boldx^I\boldy^J$ in 
$h(\boldx, \boldy)$ is
the minor of $A$ corresponding to row set $[n] \setminus J$, 
and column set $I$.  Since all minors of $A$ are coefficients
of $h(\boldx, \boldy)$, we deduce that all minors of $A$ are nonnegative.
\end{proof}

The \defn{totally positive part} of the Grassmannian $\Gr(k,n)$,
denoted $\GrTP(k,n)$ is
the set of $V \in \Gr(k,n)$ such that all \Plucker coordinates
of $V$ are strictly positive.
Since totally positive matrices are invertible, they act on the
Grassmannian $\Gr(k,n)$, and the totally positive part of the
Grassmannian is an ``orbit''.
Specifically, let $V_0 \in \GrTNN(k,n)$ be the column space of $M_0 = 
\left(\begin{smallmatrix} I_k \\ 0 \end{smallmatrix}\right)$.
Then we have $\GrTP(k,n) = \{AV_0 \mid A \in \MatTP(n \times n)\}$,
where $AV_0$ is defined to be the column space of the matrix $AM_0$.
The totally nonnegative part of the Grassmannian $\GrTNN(k,n)$
does not have such a straightforward 
relationship to $\MatTNN(n \times n)$,
but is the closure of $\GrTP(k,n)$.  These facts are essentially
Lusztig's definitions of $\GrTP(k,n)$ and $\GrTNN(k,n)$ 
\cite{Lusztig}.

\begin{proof}[Proof of Theorem~\ref{thm:grassmannian}]
As already noted in the introduction, if $f(\boldx)$ is a stable 
polynomial representing
$V \in \Gr(k,n)$, then by the phase theorem, $V \in \GrTNN(k,n)$.
It remains to prove that if $f(\boldx)$ represents a 
point $V \in \GrTNN(k,n)$, then $f(\boldx)$ is stable.

Since $\GrTNN(k,n)$ is the closure 
of $\GrTP(k,n)$, and since
the set of multiaffine stable polynomials is closed, it suffices
to prove the theorem when 
$V \in \GrTP(k,n)$.  If this is the case,
there exists an totally positive matrix $A \in \MatTP(n \times n)$
such that $V = A V_0$.  Note that the monomial $\boldx^{[k]}$ 
represents $V_0 \in \GrTNN(k,n)$.
Since the action of $A_\#$ on multiaffine
polynomials is defined via an isomorphism with the exterior algebra, 
we have that $f(\boldx) = A_\# \boldx^{[k]}$.  By Theorem~\ref{thm:matrix},
$A_\#$ is a stability preserver, and $\boldx^{[k]}$ is stable,
so $f(\boldx)$ is stable.
\end{proof}


\section{Odds and ends}
\label{sec:misc}

There is a second connection between Theorems~\ref{thm:grassmannian}
and~\ref{thm:matrix}.  If $A \in \Mat(n \times n)$, let
$A^\vee \in \Mat(n\times n)$ denote the matrix
   $A^\vee_{i,j} = (-1)^{n-j}A_{n+1-i,j}$.
Let $V \in \Gr(n,2n)$ be the column space of the $2n \times n$ matrix
$\left(\begin{smallmatrix} I_n \\ A^{\vee} \end{smallmatrix}\right)$.
It is not hard to check the following facts:
\begin{packeditemize}
\item
$V \in \GrTNN(n,2n)$ if and only if $A \in \MatTNN(n \times n)$.
\item
$A_\#(\prod_{i=1}^n (x_i+y_i))$ represents $V$, with the variables
ordered $y_1 < y_2 < \dots < y_n < x_n <  \dots < x_2 < x_1$.
\end{packeditemize}
Thus we see that Theorem~\ref{thm:grassmannian} implies
Theorem~\ref{thm:matrix}, though not by reversing the argument in
Section \ref{sec:positivity}:
$A_\#$ is a stability preserver iff
$A_\#(\prod_{i=1}^n (x_i+y_i))$ is stable iff
$V \in \GrTNN(n,2n)$ iff $A \in \MatTNN(n \times n)$.

There is another class of stable polynomials comes that from
the minors of a matrix.  If $M \in \Mat(n \times k)$, then the polynomial
\begin{equation}
\label{eqn:pluckersquares}
    \sum_{I \in {[n] \choose k}} \big|\det(M[I])\big|^2 \boldx^I
 \in \CMA[\boldx]
\end{equation}
is always stable \cite[Theorem 8.1]{COSW}.  
This raises the question: to what extent do these classes overlap?

The answer is not much.
For dimensional reasons, a general polynomial of the form
\eqref{eqn:pluckersquares} does not represent a point of $\GrTNN(k,n)$.
On the other hand, with the exception of a few small cases,
a point of $\GrTP(k,n)$ cannot be represented by
a polynomial of the form \eqref{eqn:pluckersquares}.
For ease of notation, we present the argument for $\Gr(2,6)$,
though the same idea works for $k \geq 2$, $n-k \geq 4$.

\begin{proposition}
No point of $\GrTP(2,6)$ is represented by a
polynomial of the form~\eqref{eqn:pluckersquares}.
\end{proposition}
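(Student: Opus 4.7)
The plan is to argue by contradiction. Suppose some $M \in \Mat(6 \times 2)$ makes $\sum_I |\det M[I]|^2 \boldx^I$ represent a point $V \in \GrTP(2,6)$. Since all fifteen \Plucker coordinates of $V$ are strictly positive, each $m_{ij} := \det M[\{i,j\}]$ is nonzero, so the six rows of $M$ span pairwise distinct lines in $\CC^2$. The right action of $\GL_2(\CC)$ on $M$ multiplies every $|m_I|^2$ by the common positive constant $|\det A|^2$, so after such a change of basis I may assume that rows $1, 2, 3$ are $(0,1), (1,1), (1,0)$ and that rows $4, 5, 6$ are $(z_4, 1), (z_5, 1), (z_6, 1)$ for some $z_4, z_5, z_6 \in \CC$.

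Next I would impose the \Plucker relations that the coefficients $|m_I|^2$ must satisfy in order to be the \Plucker coordinates of a point of $\Gr(2,6)$. For each $j \in \{4,5,6\}$, the relation on the $4$-subset $\{1,2,3,j\}$ reads
\[
 |m_{12}|^2 |m_{3j}|^2 - |m_{13}|^2 |m_{2j}|^2 + |m_{1j}|^2 |m_{23}|^2 = 0,
\]
which in the chosen normalization is immediate to evaluate and collapses to $2\,\mathrm{Re}(z_j) = 0$. Hence each of $z_4, z_5, z_6$ is purely imaginary; write $z_4 = i\alpha$ and $z_5 = i\beta$ with $\alpha, \beta \in \RR$ (both nonzero, since $z_j \ne z_1 = 0$). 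I would then expand the \Plucker relation on $\{1,2,4,5\}$; a similar short calculation reduces it to $2\beta(\beta - \alpha) = 0$, forcing $\beta = 0$ or $\alpha = \beta$. The first possibility gives $z_5 = z_1$ and hence $m_{15} = 0$, and the second gives $z_4 = z_5$ and hence $m_{45} = 0$. Either conclusion contradicts $V \in \GrTP(2,6)$.

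There is no major obstacle here: the proof is entirely elementary once the $\GL_2(\CC)$-normalization is in place, and the main content is really in selecting it so that the three relations on $\{1,2,3,j\}$ decouple into clean conditions on each $z_j$ separately. The conceptual point is that, although the family~\eqref{eqn:pluckersquares} has real dimension $16$ (the same as $\Gr(2,6)_\CC$), the \Plucker relations on the $|m_I|^2$ form a very rigid overdetermined system: as we saw in Example~\ref{ex:4vars} and the derivation of \eqref{eqn:4vars}, each $4$-subset essentially demands an orthogonality between two of the three \Plucker monomials $m_{ij}m_{kl}$, $m_{ik}m_{jl}$, $m_{il}m_{jk}$, and three such orthogonalities on the five points $\{1,2,3,4,5\}$ already overdetermine a totally positive configuration. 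The same strategy extends to $\Gr(k,n)$ with $k \geq 2$ and $n-k \geq 4$ by fixing a $(k-2)$-subset $T$ and invoking the three-term \Plucker relations on $T \cup \{a,b\}$ in place of the ones used here.
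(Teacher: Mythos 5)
Your proof is correct, but it takes a genuinely different route from the paper. The paper never normalizes the matrix $M$: it writes down the three-term Pl\"ucker relation for both the minors $b_I$ and their squared moduli $a_I = |b_I|^2$, deduces that every product $b_{ij}b_{kl}\overline{b_{il}b_{jk}}$ is pure imaginary, and then exhibits five such products whose product equals $a_{12}a_{13}a_{15}a_{16}a_{23}a_{34}a_{35}a_{36}a_{45}a_{56} > 0$ --- a positive real number that would have to be pure imaginary, a contradiction. Your argument instead normalizes the six rows as a point configuration in $\PP^1$ and solves the resulting system explicitly; I have checked the computations ($2\,\mathrm{Re}(z_j)=0$ from $\{1,2,3,j\}$ and $2\beta(\beta-\alpha)=0$ from $\{1,2,4,5\}$) and they are right. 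One small expository gap: passing from ``the rows span the right lines'' to ``the rows \emph{are} $(0,1),(1,1),(1,0),(z_j,1)$'' requires rescaling individual rows in addition to the right $\GL_2$ action; you should note that rescaling row $m$ by $\lambda_m$ multiplies $a_I$ by $\prod_{m\in I}|\lambda_m|^2$, which is the positive torus action and hence preserves both the Pl\"ucker relations and positivity. What your approach buys is a strictly stronger statement: you only ever use rows $1$ through $5$ and the three subsets $\{1,2,3,4\}$, $\{1,2,3,5\}$, $\{1,2,4,5\}$, so your argument already rules out $\GrTP(2,5)$, whereas the paper's five-fold product genuinely needs all six indices (and indeed $\GrTP(2,4)$ \emph{is} representable, e.g.\ by rows $(0,1),(1,1),(1,0),(i,1)$, so some lower bound on $n$ is necessary). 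What the paper's approach buys is coordinate-freeness and symmetry --- no choice of normalization, and a single clean structural fact (pure imaginarity of the cross terms) doing all the work.
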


\begin{proof}
Suppose to the contrary that $\sum a_I \boldx^I$ 
represents a point of $\GrTP(2,6)$, and $a_I = |b_I|^2$
where $b_I = \det(M[I])$ for some matrix $M$.  Then both
$\big[a_I : I \in {[6] \choose 2}\big]$ and
$\big[b_I : I \in {[6] \choose 2}\big]$ satisfy the \Plucker
relations.  For $\Gr(2,6)$ these are:
\begin{align*}
   a_{ik}a_{jl} &= a_{ij}a_{kl} + a_{il}a_{jk} \\
   b_{ik}b_{jl} &= b_{ij}b_{kl} + b_{il}b_{jk} 
\end{align*}
for $1 \leq i < j < k < l \leq 6$.  Multiplying
the second equation by its complex conjugate and using $a_I = |b_I|^2$, 
we find that
$b_{ij}b_{kl}\overline{b_{il}b_{jk}}$ 
and $\overline{b_{ij}b_{kl}}b_{il}b_{jk}$ are pure imaginary.
In particular,
\[
b_{12} b_{35} \overline{b_{15}b_{23}}
\qquad
\overline{b_{12} b_{36}} b_{16}b_{23}
\qquad
\overline{b_{34} b_{56}} b_{36}b_{45}
\qquad
\overline{b_{13} b_{45}} b_{15}b_{34}
\qquad
b_{13} b_{56} \overline{b_{16}b_{35}}
\]
are all pure imaginary.  The product of these five pure 
imaginary numbers must be pure imaginary. But instead,
their product is 
$a_{12}a_{13}a_{15}a_{16}a_{23}a_{34}a_{35}a_{36}a_{45}a_{56} > 0$.
This is a contradiction.
\end{proof}

A related result replaces the determinant of $M[I]$ with the 
permanent.
If $M \in \Mat(n \times k)$ is a matrix with nonnegative real entries
then
\begin{equation}
\label{eqn:permanent}
    \sum_{I \in {[n] \choose k}} \mathrm{per}(M[I]) \boldx^I
 \in \CMA[\boldx]
\end{equation}
is a stable polynomial \cite[Theorem 10.2]{COSW}.
It would be surprising if it were typically possible to represent
a point of $\GrTP(k,n)$ by a polynomial of the 
form~\eqref{eqn:permanent}.  For example, 
it is not hard to show this impossible if
$k=2$, $n \geq 5$,
but at present we do not have a general proof.

A multiaffine polynomial $f(\boldx) \in \RMA[\boldx]$ is said to be
a \defn{Rayleigh polynomial} if $\Delta_{ij}f : \RR_{\geq 0}^n \to \RR$
is nonnegative for all $i, j \geq 0$.  This is a relaxation of
than the criterion for stability in Theorem~\ref{thm:stabletest}:
for the not-so-keenly observant, the 
Rayleigh condition only requires $\Delta_{ij}f \geq 0$ 
on nonnegative inputs, whereas stability requires 
$\Delta_{ij} f \geq 0$ on all real inputs.
Thus real multiaffine stable polynomials are Rayleigh, but in
general the converse is not true.  
Given $V \in \Gr(k,n)$, 
let $\calB \subset {[n] \choose k}$ be the set of indices of
the nonzero \Plucker coordinates of $V$.
$\calB$ is the (set of bases of) a representable matroid, and
if $V \in \GrTNN(k,n)$, $\calB$ is called a positroid.
Marcott has recently proved the following result.

\begin{theorem}[Marcott \cite{Marcott}]
\label{thm:positroid}
If $\calB$ is a positroid, then 
$B(\boldx) = \sum_{I \in \calB} x^I$ is a Rayleigh polynomial.
\end{theorem}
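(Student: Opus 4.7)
Since $\calB$ is a positroid, by definition there exists $V \in \GrTNN(k,n)$ whose matroid is $\calB$; let $\boldp = (p_I)$ denote its \Plucker coordinates, so $p_I > 0$ exactly for $I \in \calB$. By Theorem~\ref{thm:grassmannian} the polynomial $f_\boldp(\boldx) := \sum_{I \in \calB} p_I \boldx^I$ is stable, and by Theorem~\ref{thm:stabletest} this means $\Delta_{ij} f_\boldp(\boldx) \geq 0$ for every $\boldx \in \RR^n$ and every $i \neq j$. The task is to transfer this (strictly stronger) inequality to the unweighted polynomial $B$, where only the nonnegative orthant is required.

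The naive attempt --- specialize $\boldp$ to $(1,1,\dots,1)$ --- fails, because the uniform tuple rarely satisfies the \Plucker relations (already for $U_{2,4}$ the relation $p_{13}p_{24} = p_{12}p_{34}+p_{14}p_{23}$ forces $1 = 2$). The torus action $(t_1,\dots,t_n) \cdot \boldp = (p_I\prod_{i \in I} t_i)$ on the positroid cell does produce a whole $n$-parameter family of stable polynomials $f_{\boldt \cdot \boldp}$, each giving a valid inequality $\Delta_{ij} f_{\boldt \cdot \boldp}(\boldx) \geq 0$, but the torus orbit of $\boldp$ typically contains no uniform tuple either. So $B$ is generally not stable and is not a single $f_\boldp$, and the deduction must combine inequalities from many different choices of $\boldp$.

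My plan is to establish $\Delta_{ij} B(\boldx_0) \geq 0$ pointwise at each $\boldx_0 \in \RR_{>0}^n$ (extending to the boundary by continuity) by realizing it as a positive combination of the inequalities $\Delta_{ij} f_\boldp(\boldx_0) \geq 0$ as $\boldp$ ranges over the positroid cell of $\calB$. Concretely, I would use Postnikov's combinatorial parameterization \cite{Postnikov} of the cell by positive real coordinates $\boldw \in \RR^d_{>0}$, under which each $p_I = p_I(\boldw)$ is a positive polynomial in $\boldw$. Every $\boldw$ then gives a valid inequality, and the goal is to combine these inequalities (by averaging against a suitable positive measure, by a limiting degeneration, or by an explicit polynomial identity) so that the combination equals, up to a positive factor, $\Delta_{ij} B(\boldx_0)$.

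The main obstacle is producing such a combination. Because $\Delta_{ij} f_\boldp$ is quadratic in $\boldp$, no linear averaging in $\boldw$ reproduces $\Delta_{ij} B$, and the positroid cell is not convex in \Plucker coordinates. I expect the resolution to rely on the finer combinatorial structure of positroids: expanding each $p_I(\boldw)$ as a positive sum over the combinatorial objects of Postnikov's parameterization and matching the resulting monomials term-by-term against the expansion of $\Delta_{ij} B$ via a basis-exchange bijection. This is the substantive combinatorial content of the result, sitting on top of the stability input provided by Theorem~\ref{thm:grassmannian}.
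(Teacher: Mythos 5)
First, a point of orientation: the paper does not prove Theorem~\ref{thm:positroid} at all --- it is quoted as an external result of Marcott \cite{Marcott} --- so there is no internal proof to compare yours against. Judged on its own terms, your proposal has a genuine gap, and you have in fact located it yourself: everything after ``My plan is\dots'' is a program, not an argument. The step that would constitute the proof --- producing, for each $i \neq j$ and each $\boldx_0$ in the positive orthant, a combination of the inequalities $\Delta_{ij} f_{\boldp}(\boldx_0) \geq 0$ that yields $\Delta_{ij}B(\boldx_0) \geq 0$ --- is never exhibited. As formulated, the plan is either vacuous or unproven: if the positive coefficients in the combination are allowed to depend on $\boldx_0$, then ``$\Delta_{ij}B(\boldx_0)$ is a positive combination of nonnegative numbers'' is merely a restatement of the inequality to be proved; if instead you intend a uniform identity expressing $\Delta_{ij}B$ in terms of the $\Delta_{ij}f_{\boldp}$ with positive weights, you give no construction, and there are structural obstructions you yourself flag (quadratic dependence of $\Delta_{ij}$ on the coefficients, non-convexity of the cell) together with one you do not: each $\Delta_{ij}f_{\boldp}$ is nonnegative on \emph{all} of $\RR^n$, whereas $\Delta_{ij}B$ need only be nonnegative on the orthant --- indeed the paper notes it is not even known whether $B(\boldx)$ is stable --- so any $\boldx_0$-independent positive combination would prove too much. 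The ``basis-exchange bijection'' invoked in your last sentence is precisely the substantive content of Marcott's theorem, not an available tool.

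Second, the overall strategy of deducing the result from Theorem~\ref{thm:grassmannian} runs against the paper's own assessment: the introduction states explicitly that Marcott's result ``has a similar flavour to Theorem~\ref{thm:grassmannian}, but neither theorem implies the other.'' That remark does not by itself show your route must fail, but it is a clear signal that stability of the weighted polynomials $f_{\boldp}$ over the positroid cell is not known to transfer to the Rayleigh property of the unweighted polynomial $B$ by averaging, degeneration, or specialization. Your write-up correctly diagnoses the difficulty but does not overcome it; the theorem remains unproved by this proposal.
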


This is much like the harder direction of
Theorem~\ref{thm:grassmannian}, except that the
coefficients in the polynomial have been stripped away.  
The converse of 
Theorem~\ref{thm:positroid} is not true: if $\calB$ is matroid that is
not a positroid, 
then $B(\boldx)$ may or may not be Rayleigh; there is no known
classification of Rayleigh matroids.  It is also not presently known 
whether, for positroids, $B(\boldx)$ is a stable polynomial.

\bigskip

We mention two applications of the ideas developed in this paper.
A linear endomorphism 
$\delta : \CMA[\boldx] \to \CMA[\boldx]$ is an
\defn{infinitesimal stability preserver} if $\exp(t\delta) : \CMA[\boldx]
\to \CMA[\boldx]$ is a stability preserver for all $t \geq 0$.
The set of all infinitesimal stability preservers is a closed convex cone
in the space of all operators on $\CMA[\boldx]$.
This can be seen as follows:
if $\alpha$ and $\beta$ are infinitesimal stability
preservers, then for $t \geq 0$, $\exp(t(\alpha+\beta)) 
= \lim_{m \to \infty} 
\big(\exp(\frac{t}{m}\alpha) \exp(\frac{t}{m}\beta)\big)^m$
is a stability preserver, and hence $\alpha+\beta$ is an infinitesimal
stability preserver.

Our first application is an example of a non-trivial
family of infinitesimal stability preservers.
Let $Z \in \Mat(n \times n)$ be a matrix with real diagonal entries, 
and nonnegative off-diagonal entries.  Define $\delta_Z : \CMA[\boldx] \to
\CMA[\boldx]$ by 
\[
  \delta_Z x^J = 
           \sum_{j \in J} \left(Z_{jj} +
           \sum_{i \in [n] \setminus J} 
            Z_{ij} \frac{x_i}{x_j} \right) x^J
\]
for all $J \subset [n]$, and extending linearly. 

\begin{proposition}
\label{prop:inf-tnn}
If $Z$ is tridiagonal (i.e. $Z_{ij} = 0$ for $|i-j| >1$), then 
for all $t \geq 0$, $\exp(tZ)$ is totally nonnegative,
and $\exp(t \delta_Z) = \exp (tZ)_\#$.
\end{proposition}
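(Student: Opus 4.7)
The statement has two claims: (i) $\exp(tZ) \in \MatTNN(n\times n)$ for every $t \geq 0$, and (ii) $\exp(t\delta_Z) = \exp(tZ)_\#$ as operators on $\CMA[\boldx]$. The plan is to handle them separately.

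For (i), I would consider the set $\mathfrak{C} = \{M \in \Mat(n\times n) : \exp(tM) \in \MatTNN(n\times n) \text{ for all } t \geq 0\}$. This is trivially closed under multiplication by nonnegative scalars, and closed under addition by the Lie--Trotter product formula $\exp(t(\alpha+\beta)) = \lim_{m\to\infty}(\exp(t\alpha/m)\exp(t\beta/m))^m$ together with the fact that $\MatTNN(n\times n)$ is a closed multiplicative semigroup; so $\mathfrak{C}$ is a closed convex cone. It contains $\pm E_{ii}$ (whose exponentials are diagonal matrices with positive entries) and the off-diagonal elementary matrices $E_{i,i+1}$ and $E_{i+1,i}$ (whose one-parameter exponentials are the Loewner--Whitney generators $E_i(t)$ and $F_i(t)$). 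A tridiagonal $Z$ with real diagonal and nonnegative off-diagonal entries is a nonnegative combination of these generators, so $Z \in \mathfrak{C}$.

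For (ii), the key point to prove is that $\delta_Z$ is the transfer under $\xi$ of the standard derivation of $Z$ on the exterior algebra. I would define $D_Z : \Lambda[\boldx] \to \Lambda[\boldx]$ by $D_Z(x_{j_1} \wedge \cdots \wedge x_{j_k}) = \sum_\ell x_{j_1} \wedge \cdots \wedge Zx_{j_\ell} \wedge \cdots \wedge x_{j_k}$. Since $D_Z$ is the infinitesimal generator of the action of $\exp(tZ)$ on $\Lambda[\boldx]$, one has $\exp(tD_Z) = \exp(tZ)$ on $\Lambda[\boldx]$, and conjugating by $\xi$ gives $\xi^{-1}\exp(tD_Z)\xi = \exp(tZ)_\#$. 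So the game reduces to proving $\xi \delta_Z \xi^{-1} = D_Z$. I would verify this by comparing values on a basis element $\boldx^{\wedge J}$: expanding $Zx_{j_\ell} = \sum_i Z_{i,j_\ell} x_i$, the terms with $i \in J \setminus \{j_\ell\}$ vanish by antisymmetry, the $i = j_\ell$ term yields $Z_{j_\ell j_\ell} \boldx^{\wedge J}$, and tridiagonality restricts the remaining $i \notin J$ case to $i \in \{j_\ell \pm 1\}$.

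The one substantive obstacle is the sign check in the last case. When $i = j_\ell - 1$ and $i \notin J$, one has $j_{\ell - 1} < i$ (otherwise equality would place $i$ in $J$) and $i < j_\ell < j_{\ell + 1}$; the case $i = j_\ell + 1$ is symmetric. Hence $x_i$ already sits in its sorted slot in $x_{j_1} \wedge \cdots \wedge x_i \wedge \cdots \wedge x_{j_k}$ and no sign arises on applying $\xi^{-1}$. Matching the surviving terms against the defining formula for $\delta_Z \boldx^J$ would yield $\xi \delta_Z \xi^{-1} = D_Z$, after which (ii) follows by exponentiation. The tridiagonality hypothesis is precisely what is needed here: for non-tridiagonal $Z$, inserting $x_i$ in the $\ell$-th slot would generally require reordering past other factors, introducing signs that obstruct the identification.
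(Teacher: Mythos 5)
Your proposal is correct and follows essentially the same route as the paper: part (i) is the infinitesimal-generation form of the Loewner--Whitney theorem (which you derive from the semigroup-generator form via Lie--Trotter), and part (ii) amounts to verifying that $\delta_Z$ is the infinitesimal generator of $t \mapsto \exp(tZ)_\#$, which the paper does by linearity on single-entry matrices and you do by the equivalent direct check that $\xi\delta_Z\xi^{-1}$ is the derivation $D_Z$, including the correct observation that tridiagonality is what keeps the reordering signs trivial.
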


\begin{proof}
The fact that $\exp(tZ)$ is totally nonnegative follows from
the Loewner--Whitney theorem, one formulation of which is that 
matrices of this form infinitesimally generate $\GLTNN(n)$.  To see
that $\exp(t \delta_Z) = \exp (tZ)_\#$, we need to verify that
$\delta_Z = \frac{\partial}{\partial t} \exp(tZ)_\# \big|_{t = 0}$.
But since $Z \mapsto \delta_Z$, and $Z \mapsto 
\frac{\partial}{\partial t} \exp(tZ)_\# \big|_{t = 0}$ are both
linear maps, it suffices to check this when $Z$ has
a single nonzero entry; this is straightforward.
\end{proof}

\begin{proposition}
\label{prop:inf-preserver}
For any $Z \in \Mat(n \times n)$ with real diagonal entries, and 
nonnegative off-diagonal
entries, $\delta_Z$ is an infinitesimal stability preserver.
\end{proposition}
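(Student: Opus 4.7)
The plan is to exploit the linearity of $Z \mapsto \delta_Z$ together with the (closed convex cone) structure of the set of infinitesimal stability preservers, which was established in the paragraph immediately preceding the proposition. It then suffices to realize $\delta_Z$ as a nonnegative sum of operators $\delta_{Z'}$ for a controllable family of matrices $Z'$.

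First I would decompose
\[
Z \;=\; D \,+\, \sum_{i \neq j} Z_{ij}\, E_{ij},
\]
where $D = \mathrm{diag}(Z_{11}, \dots, Z_{nn})$ has arbitrary real entries and each scalar $Z_{ij}$ with $i \neq j$ is nonnegative. Both $D$ and each adjacent matrix unit $E_{i,i+1}$, $E_{i+1,i}$ are tridiagonal with real diagonal and nonnegative off-diagonal entries, so Proposition~\ref{prop:inf-tnn} identifies $\exp(t\delta_{Z'}) = \exp(tZ')_\#$ with $\exp(tZ') \in \MatTNN(n \times n)$ for $t \geq 0$. By Theorem~\ref{thm:matrix} this is a stability preserver, so $\delta_{Z'}$ is an infinitesimal stability preserver for each such tridiagonal $Z'$.

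The main obstacle is treating $\delta_{E_{ij}}$ when $|i - j| > 1$, where $E_{ij}$ is not tridiagonal. For this I would use permutation equivariance of the formula defining $\delta_Z$. For $\sigma \in S_n$, let $P_\sigma : \CMA[\boldx] \to \CMA[\boldx]$ be the linear map given by $P_\sigma \boldx^I = \boldx^{\sigma(I)}$. A short calculation directly from the defining formula for $\delta_Z$ yields
\[
P_\sigma \,\delta_{E_{ab}}\, P_\sigma^{-1} \;=\; \delta_{E_{\sigma(a),\, \sigma(b)}},
\]
and $P_\sigma$ is itself a stability preserver since permuting variables is an isomorphism of the stability condition. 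Hence conjugation by $P_\sigma$ carries infinitesimal stability preservers to infinitesimal stability preservers. Choosing $\sigma$ so that $\sigma^{-1}(i)$ and $\sigma^{-1}(j)$ are adjacent then reduces $\delta_{E_{ij}}$ to the tridiagonal case above. I would emphasize that this equivariance is a feature of $\delta_Z$ as defined in the paper; it would not hold for the action $Z_\#$, which differs from $\delta_Z$ on non-tridiagonal $Z$ due to signs arising in the exterior algebra, so it is essential that we work directly with the polynomial-level operator $\delta_Z$.

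Assembling the pieces, $\delta_Z = \delta_D + \sum_{i \neq j} Z_{ij}\, \delta_{E_{ij}}$ exhibits $\delta_Z$ as a finite nonnegative combination of infinitesimal stability preservers, which by the closed convex cone property completes the proof.
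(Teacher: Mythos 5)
Your proof is correct and takes essentially the same route as the paper: the paper likewise writes a general $Z$ as a combination of permutation conjugates of tridiagonal matrices, handles the tridiagonal case via Proposition~\ref{prop:inf-tnn} and Theorem~\ref{thm:matrix}, appeals to the permutation symmetry of the formula defining $\delta_Z$, and concludes with linearity of $Z \mapsto \delta_Z$ and the closed convex cone property. Your decomposition into the diagonal plus individual matrix units, with explicit conjugation $P_\sigma \delta_{E_{ab}} P_\sigma^{-1} = \delta_{E_{\sigma(a),\sigma(b)}}$, is just a more granular version of the paper's decomposition $Z = \sum_i Q_i Z_i Q_i^{-1}$.
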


\begin{proof}
First suppose $Z$ is tridiagonal.
In this case, by Proposition~\ref{prop:inf-tnn} and Theorem~\ref{thm:matrix}, 
we have that $\exp(t \delta_Z) = \exp(t Z)_\#$ is a stability
preserver for all $t \geq 0$; hence $\delta_Z$ is an infinitesimal
stability preserver.

Next suppose that
$Z = Q_1Z_1Q_1^{-1}$ for some permutation matrix $Q_1$ and some 
tridiagonal matrix $Z_1$.
Since the definition of $\delta_Z$ is symmetric in variables 
$x_1, \dots, x_n$, it is clear that $\delta_Z$ is an infinitesimal
stability preserver in this case too.

Finally observe that a general $Z$ can be written as
\[
  Z = \sum_{i=1}^s Q_i Z_i Q_i^{-1}
\]
where each $Z_i$ is a real tridiagonal matrix with 
nonnegative off-diagonal entries, and $Q_i$ is a permutation matrix.
Since the map $Z \mapsto \delta_Z$ is linear, we see that
$\delta_Z$ is a sum of infinitesimal stability preservers,
and the result follows.
\end{proof}

\begin{remark}
The proof of Proposition~\ref{prop:inf-preserver}
is fundamentally the same as the proof of
\cite[Proposition 5.1]{BBL}, which also establishes a family of
infinitesimal stability preservers.
The two families are superficially similar but neither is a special
case of the other.
Concretely, the operators in \cite{BBL} are given by 
$x^J \mapsto \sum_{j \in J} 
           \sum_{i \in [n] \setminus J} 
            Z_{ij} \left(\frac{x_i}{x_j} - 1 \right) x^J$
for a real symmetric matrix $Z$; the exponentials of the operators in this
family are doubly stochastic, and have the physical interpretation
as generators for a symmetric exclusion process on $n$ sites.
Proposition~\ref{prop:inf-preserver} seems to be about the best one
can do to mimic this construction for asymmetrical matrices.
\end{remark}

As a second application, we prove a slightly
more general version of the phase theorem.

\begin{theorem}
\label{thm:phaseplus}
Let $f(\boldx) \in \CC[\boldx]$ be a stable polynomial.
If $f(\boldx)$ has no terms of degree $k$, $k \in \ZZ$,
then there exists a nonzero scalar $\alpha \in \CC^\times$ such that
of all terms of degree $k+1$ in $\alpha f(x)$ 
and all terms of degree $k-1$ in $-\alpha f(x)$ 
have nonnegative real coefficients.
\end{theorem}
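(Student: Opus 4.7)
I will begin by proving the univariate version of the statement, then reduce the multivariate case to it via polarization, partial derivatives, and bivariate restriction.

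\emph{Univariate lemma.} If $g(t)=\sum c_d t^d$ is stable with $c_k=0$, then $c_{k-1}c_{k+1}$ is a non-positive real. Differentiating $k-1$ times (which preserves stability by the univariate Gauss--Lucas theorem) reduces to the case where $h(t)$ is stable with $h'(0)=0$, and it suffices to show that $h(0)h''(0)$ is a non-positive real. Factor $h(t)=h_n\prod_j(t-r_j)$ with $\mathrm{Im}(r_j)\le 0$; the reciprocals $w_j:=1/r_j$ lie in the closed upper half plane, and $h'(0)=0$ forces $\sum_j w_j = 0$, so each $w_j$ is real (their non-negative imaginary parts sum to zero). Then $h''(0)/h(0)=-\sum_j w_j^2 \le 0$ real, as required.

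\emph{Multivariate reduction.} Polarize $f$ via the Grace--Walsh--Szeg\"o coincidence theorem to reduce to the multiaffine case (total degrees and coefficient phases are preserved up to positive scalars), and then reduce $k$ to $1$ by replacing $f$ with $\partial^S f := \prod_{i\in S}\partial_{x_i}f$ for each $(k-1)$-subset $S\subset[n]$; this partial derivative is stable multiaffine with vanishing linear part (because $f_k=0$), constant term $c_S$, and quadratic coefficients $c_{S\cup\{p,q\}}$. For multiaffine stable $\tilde f$ with $\tilde f_1=0$, restricting to any pair of variables gives the stable polynomial $c_\emptyset + c_{\{p,q\}}x_p x_q$, and further restricting to $x_p=x_q=t$ yields the univariate stable $c_\emptyset + c_{\{p,q\}}t^2$; the univariate lemma then supplies $c_\emptyset c_{\{p,q\}}\le 0$ real. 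When $c_\emptyset\neq 0$ this pins every nonzero $c_{\{p,q\}}$ to the phase $\arg(-c_\emptyset)$, and $\alpha = -1/c_\emptyset$ works. When $c_\emptyset = 0$, the polynomial $\tilde f_2$ is the bottom homogeneous part of a stable polynomial and is itself stable by Hurwitz's theorem (applied to $\lim_{\lambda\to 0^+}\lambda^{-2}\tilde f(\lambda\boldx)$), so the classical phase theorem supplies a uniform phase for its coefficients.

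\emph{Main obstacle.} The chief difficulty is patching the local scalars $\alpha_S$ arising from the different $(k-1)$-subsets into a single global $\alpha$. Two $\alpha_S$ and $\alpha_{S'}$ with $|S\cap S'| = k-2$ are forced to agree in phase whenever a common $(k+1)$-superset $I\supset S\cup S'$ has $c_I\neq 0$, and connectivity of the graph on $(k-1)$-subsets with this adjacency handles the generic case. In sparse-support configurations where the graph can disconnect, one must fall back on the same Hurwitz/bottom-stability argument of the base case, now applied to further restrictions of $f$ to smaller subsets of variables, to rule out inconsistent phase choices and recover a single global scalar.
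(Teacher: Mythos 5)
Your route to the local phase constraints is sound and genuinely more elementary than the paper's: where the paper extracts the two‑variable polynomial $a+bx_i+cx_j+dx_ix_j$ with a Borcea--\Branden stability preserver and analyzes it via Example~\ref{ex:4vars}, you get the same constraints from $\prod_{i\in S}\partial_{x_i}$, setting the remaining variables to $0$, and a logarithmic‑derivative computation. One slip in the univariate lemma: the correct conclusion is that the \emph{ratio} $c_{k+1}/c_{k-1}$ is a nonpositive real, not the product $c_{k-1}c_{k+1}$ (the product claim already fails for $i(t^2-1)$); what you actually prove is $h''(0)/h(0)\le 0$, i.e.\ the ratio statement, and that is what you use downstream, so this is cosmetic.

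The genuine gap is exactly the step you flag as the ``main obstacle,'' and flagging it is not the same as closing it. Your local constraints only compare two coefficients when they are linked through a nonzero coefficient one degree away: $c_I$ and $c_{I'}$ in degree $k+1$ are tied only via some $(k-1)$-set $S\subset I\cap I'$ with $c_S\ne 0$, and $c_S$, $c_{S'}$ in degree $k-1$ only via some $(k+1)$-set $I\supset S\cup S'$ with $c_I\ne 0$. For sparse supports this graph disconnects, and your proposed fallback --- ``the same Hurwitz/bottom-stability argument \dots applied to further restrictions \dots to rule out inconsistent phase choices'' --- is a gesture, not an argument: restrictions of $f$ to small variable sets mix coefficients from several degrees, so they do not in general isolate the degree-$(k\pm1)$ coefficients the way your base case does, and it is not clear what stable polynomial you would apply the phase theorem to. This patching problem is precisely where the paper does its real work, and where Theorem~\ref{thm:matrix} enters: the approximation lemma perturbs $f$ by the stability preservers $E_i(t)_\#$, $F_i(t)_\#$ (stability is preserved because these matrices are totally nonnegative) to a nearby stable $f_\varepsilon$ with the same degree gaps but \emph{full} support in every occupied degree; with full support the ``same phase'' relation becomes transitive across all of degree $k+1$ and all of degree $k-1$, and the statement for $f$ follows by letting $\varepsilon\to 0$. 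Without this support-filling device, or a concrete substitute for it, your plan does not yield a proof.
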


\begin{remark} 
There cannot be large gaps in the degrees of a stable polynomial:
if $f(\boldx)$ is stable and has no terms of degree $k$, then either
$f(\boldx)$ has terms of \emph{both} degree $k+1$ and $k-1$, or
$k > \mathop\mathrm{maxdeg} f(\boldx)$, or
$k < \mathop\mathrm{mindeg} f(\boldx)$.  This can be deduced from
the corresponding fact for single variable polynomials, or
from an argument similar to the one presented below.
It follows that Theorem~\ref{thm:phaseplus} also implies the stronger 
version of the phase theorem in \cite[Theorem 6.2]{COSW}.
\end{remark}

The \defn{support}
of a polynomial $f(\boldx)$ is the set of monomials in $\CC[\boldx]$
that appear in $f(\boldx)$ with a nonzero coefficient.
Define $\|f(\boldx)\|$ to be the maximum of the absolute values
of the coefficients of $f(\boldx)$.
For example, if $f(\boldx) = 4x_1x_2^2 - x_1^3$, then the support
of $f(\boldx)$ is $\{x_1x_2^2, x_1^3\}$, and $\|f(\boldx)\| = 4$.

\begin{lemma}
Let $f(\boldx) \in \CMA[\boldx]$ be a multiaffine stable polynomial.
For every $\varepsilon > 0$, there exists a polynomial 
$f_\varepsilon(\boldx) \in \CMA[\boldx]$ such that
\begin{packedenum}
\item[(i)] $f_\varepsilon(\boldx)$ is stable;
\item[(ii)] $\|f(\boldx) - f_\varepsilon(\boldx)\| < \varepsilon$; 
\item[(iii)] for all $k \in \ZZ$, 
if $f(\boldx)$ has no terms of
degree $k$ then $f_\varepsilon(\boldx)$ has no terms of degree $k$; and
\item[(iv)] if $f(\boldx)$ has a term of degree $k$, then
the support of $f_\varepsilon(\boldx)$
contains all multiaffine monomials of degree $k$.
\end{packedenum}
\end{lemma}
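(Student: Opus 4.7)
The plan is to take $f_\varepsilon(\boldx) := A_\# f(\boldx)$ for a carefully chosen totally positive matrix $A \in \MatTP(n \times n)$ near the identity. Properties (i)--(iii) then follow immediately: (i) from Theorem~\ref{thm:matrix}, since $A$ is totally nonnegative; (iii) because $A_\#$ preserves the homogeneous degree of multiaffine polynomials; and (ii) by continuity, as $A \mapsto A_\# f$ depends polynomially on the entries of $A$ and reduces to $f$ at $A = I$, so $\|A_\# f - f\|$ can be made smaller than any prescribed $\varepsilon$. The substantive content is (iv), which demands that $A$ be chosen not merely close to $I$ but also so as to achieve full support in every homogeneous degree $f$ already uses.

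To analyze (iv), write $f = \sum_I c_I \boldx^I$ and expand $A_\# \boldx^I = \sum_{|J|=|I|} \det(A_{J,I}) \boldx^J$, where $A_{J,I}$ denotes the submatrix of $A$ with row set $J$ and column set $I$. The coefficient of $\boldx^J$ in $A_\# f$ is then the polynomial
\[
P_J(A) \;:=\; \sum_{|I|=|J|} c_I \det(A_{J,I})
\]
in the entries of $A$. I claim $P_J$ is not identically zero whenever $f$ has a nonzero degree-$|J|$ part. Indeed, for fixed $J$, the minor $\det(A_{J,I})$ involves only entries with column index in $I$, and contains the monomial $\prod_{j \in J} A_{j,\sigma(j)}$ for each bijection $\sigma : J \to I$; this monomial appears in no $\det(A_{J,I'})$ with $I' \neq I$. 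So in a combination $\sum_I c_I \det(A_{J,I})$ with some $c_{I_0} \neq 0$, any such monomial for $I_0$ survives with coefficient $\pm c_{I_0} \neq 0$, and the polynomial is nonzero.

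Consequently, for each pair $(k, J)$ with $f_k \neq 0$ and $|J| = k$, the locus $\{A : P_J(A) = 0\}$ is a proper Zariski-closed subvariety of $\Mat(n \times n)$, and their finite union $Z$ is still a proper subvariety. Meanwhile $\MatTP(n \times n)$ is open in $\Mat(n \times n)$ and has $I$ in its closure (since $I \in \MatTNN = \overline{\MatTP(n \times n)}$ by Whitney's theorem), so $U_\delta := \{A \in \MatTP(n \times n) : \|A - I\| < \delta\}$ is a nonempty open subset of $\Mat(n \times n)$ for every $\delta > 0$. Because $U_\delta$ cannot be contained in the proper subvariety $Z$, we may pick $A \in U_\delta \setminus Z$; taking $\delta$ small enough ensures $\|A_\# f - f\| < \varepsilon$, and by construction every $P_J(A) \neq 0$ for the relevant $J$'s. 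The only ingredient beyond a routine genericity argument is the invocation of Whitney's density result to guarantee that $\MatTP$ reaches arbitrarily close to $I$; the rest is linear independence of minors plus continuity.
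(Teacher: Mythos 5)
Your proposal is correct, but it takes a genuinely different route from the paper. The paper's proof is a maximality-plus-contradiction argument: it chooses $f_\varepsilon(\boldx)$ to have maximal support among all polynomials satisfying (i)--(iii) within the $\varepsilon$-ball around $f(\boldx)$, and then shows that if (iv) failed one could apply some elementary generator $E_i(t)$ or $F_i(t)$ with $t>0$ small to strictly enlarge the support while preserving (i)--(iii) --- contradiction. Your proof instead produces $f_\varepsilon(\boldx) = A_\# f(\boldx)$ in one shot, for a single generic $A \in \MatTP(n\times n)$ near the identity. Both arguments lean on Theorem~\ref{thm:matrix} for property (i) and on degree preservation of $A_\#$ for (iii), and there is no circularity since the lemma comes after that theorem. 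What your version buys is that the support-filling step becomes a clean genericity argument: the key observation that the monomials of $\det(A_{J,I})$ record the column set $I$, so the minors $\{\det(A_{J,I})\}_{|I|=|J|}$ are linearly independent as polynomials in the entries of $A$, makes each $P_J$ a nonzero polynomial, and a nonempty open subset of $\MatTP(n\times n)$ (which exists arbitrarily close to $I$ by Whitney's density theorem, as quoted in Section~\ref{sec:positivity}) cannot lie in the finite union of their zero loci. This avoids the one slightly terse step in the paper's proof, namely the claim that a non-full support in some degree can always be strictly enlarged by a single $E_i(t)$ or $F_i(t)$; the price is the (routine) appeal to Zariski-density of the real points and to the fact that $I$ lies in the closure of $\MatTP(n\times n)$. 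Your argument is complete and correct as written.
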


\begin{proof}
Take $f_\varepsilon(\boldx) \in \CMA[\boldx]$ such that
(i)--(iii) above are satisfied, and
subject to these conditions $f_\varepsilon(\boldx)$ has maximal support.
We claim that $f_\varepsilon(\boldx)$ must also satisfy property (iv).
If not, then
we can find a matrix $A = E_i(t)$ or $F_i(t)$ such that
for all but finitely many $t \in \RR$,
$A_\# f_\varepsilon(\boldx)$ has strictly larger support 
than $f_\varepsilon(\boldx)$.
By taking $t>0$ sufficiently small, 
we can achieve 
$\|f_\varepsilon(\boldx) - A_\# f_\varepsilon(\boldx)\| 
< \varepsilon - \|f(\boldx) - f_\varepsilon(\boldx)\|$.
Thus, $\|f(\boldx) - A_\# f_\varepsilon(\boldx)\| < \varepsilon$,
i.e. $A_\# f_\varepsilon(\boldx)$ satisfies (ii).
By Theorem~\ref{thm:matrix}, and $A_\# f_\varepsilon(\boldx)$ satisfies
(i), and since $A_\#$ 
preserves degree, $A_\# f_\varepsilon(\boldx)$ satisfies
(iii).
Thus we have a contradiction in the choice of $f_\varepsilon(\boldx)$.
\end{proof}

\begin{proof}[Proof of Theorem~\ref{thm:phaseplus}]
By the Grace--Walsh--Szeg\"o coincidence theorem 
\cite{Grace, Szego, Walsh} it suffices to prove this in the
case where $f(\boldx) \in \CMA[\boldx]$ is a multiaffine 
polynomial.  

Consider $f_\varepsilon(\boldx)$, $\varepsilon > 0$.
For any $I \subset [n]$, and any $i, j \in [n] \setminus I$
we can write 
\[
   f_\varepsilon(\boldx) = \boldx^I(a + bx_i + cx_j + dx_ix_j) 
  + \ldots
\]
where the $\ldots$ indicates all terms that are not of this form.  
It is straightforward 
(using Theorem~\ref{thm:master} or elementary arguments)
to check that
the linear map $\phi : \CC[\boldx] \to \CC[\boldx]$ defined
by 
\[
   \phi(x^J) = 
\begin{cases}
x^{J \setminus I} & \quad \text{if $I \subset J \subset I \cup \{i,j\}$} \\
0 & \quad\text{otherwise}
\end{cases}
\]
is a stability preserver.  Thus $\phi(f_\varepsilon(\boldx)) = 
a+ bx_i +c x_j + dx_ix_j$
is stable and multiaffine in two variables.  
If $|I| = k$, then $a = 0$
from which it is easy to show that $b$ and $c$ have same phase.
By property (iv) of $f_\varepsilon(\boldx)$, $b\neq0$ iff $c\neq0$, 
which implies that the ``same phase'' relation is transitive.
Thus by considering all $I$ with $|I| = k$, we see that 
all terms of degree $k+1$ in $f_\varepsilon(\boldx)$ have 
the same phase.
Similarly $|I| = k-2$, then $d=0$ and we have the same 
result for terms of degree $k-1$.  
If $|I| = k-1$ then $b=c=0$, and we deduce that $d$ and $-a$ have the 
same phase.  
This shows that the result is true for the polynomial 
$f_\varepsilon(\boldx)$.  
The theorem now follows, since 
$f(\boldx) = \lim_{\varepsilon \to 0} f_\varepsilon(\boldx)$.
\end{proof}


\bigskip

\noindent
\footnotesize%
   \textsc{Combinatorics and Optimization Department,
       University of Waterloo, 200 University Ave. W.  Waterloo,
       ON, N2L 3G1, Canada.} \texttt{kpurbhoo@uwaterloo.ca}.

\end{document}